\documentclass{article}
 \usepackage[final, nonatbib]{neurips_2019}
%
%
\usepackage[utf8]{inputenc} 
\usepackage[T1]{fontenc}    
\usepackage{hyperref}       
\usepackage{url}            
\usepackage{booktabs}       
\usepackage{amsfonts}       
\usepackage{nicefrac}       
\usepackage{microtype}      
\usepackage{amsmath}
\usepackage{graphicx}
\usepackage{multirow}
\usepackage{amsthm}

\usepackage{algorithm}
\usepackage{algorithmic}
\usepackage{color}
\usepackage{enumitem}
\usepackage{comment}

\usepackage{hyperref}
\newcommand{\RR}{\mathbb{R}}

\newcommand{\ol}{\overline}

\newcommand{\s}{\sigma}

\renewcommand{\b}{\beta}
\renewcommand{\L}{\mathcal{L}} 

\newcommand{\argmin}{\operatorname{argmin}}
\newcommand{\dist}{\operatorname{dist}}


\newcommand{\subjto}{\quad \text{s.t.} \quad}


\newtheorem{theorem}{Theorem}[section] 

\newtheorem{lemma}[theorem]{Lemma}
\newtheorem{corollary}[theorem]{Corollary}

\title{An  Inexact Augmented Lagrangian Framework for \\ Nonconvex Optimization with Nonlinear Constraints}

%

\author{Mehmet Fatih Sahin\\ \texttt{mehmet.sahin@epfl.ch}
\And Armin Eftekhari \\ \texttt{armin.eftekhari@epfl.ch}
\And Ahmet Alacaoglu \\ \texttt{ahmet.alacaoglu@epfl.ch}
\And Fabian Latorre \\ \texttt{fabian.latorre@epfl.ch}
\And Volkan Cevher \\ \texttt{volkan.cevher@epfl.ch} 
\And \\
LIONS, Ecole Polytechnique F\'ed\'erale de Lausanne, Switzerland
}

\begin{document}

\maketitle


\begin{abstract}
We propose a practical inexact augmented Lagrangian method (iALM) for nonconvex problems with nonlinear constraints. 
We characterize the total computational complexity of our method subject to a verifiable geometric condition, which is closely related to the Polyak-Lojasiewicz and Mangasarian-Fromowitz conditions. 

\vspace{1mm}
In particular, when a first-order solver is used for the inner iterates, we prove that iALM  finds a first-order stationary point with {$\tilde{\mathcal{O}}(1/\epsilon^4)$} calls to the first-order oracle. {If, in addition, the problem is smooth and} a second-order solver is used for the inner iterates, iALM  finds a second-order stationary point with $\tilde{\mathcal{O}}(1/\epsilon^5)$ calls to the second-order oracle, {which matches the known theoretical complexity result in the literature.} 

\vspace{1mm}
We also provide strong numerical evidence on large-scale machine learning problems, including the Burer-Monteiro factorization of  semidefinite programs, and a novel nonconvex relaxation of the standard basis pursuit template. For these examples, we also show how to verify our geometric condition.

{\color{blue} This paper was updated on April 21, 2022, due to an error in the proof of Corollary \eqref{cor:first}. The gradient complexity for obtaining a first order stationary point is now $\tilde{\mathcal{O}}( 1/{\epsilon^4})$ instead of $\tilde{\mathcal{O}}( 1/{\epsilon^3})$. The error was due to an order mistake while using the complexity result of APGM from \cite{ghadimi2016accelerated}}.

\end{abstract}

\vspace{-5mm}
\section{Introduction}\vspace{-3mm}
\label{intro}
We study the  nonconvex optimization problem
\begin{equation}
\label{prob:01}
\underset{x\in \mathbb{R}^d}{\min}\,\, f(x)+g(x)  \subjto  A(x) = 0,
\end{equation}
where $f:\mathbb{R}^d\rightarrow\mathbb{R}$ is  a {continuously-differentiable} nonconvex function and $A:\mathbb{R}^d\rightarrow\mathbb{R}^m$ is a  nonlinear operator. 
We assume that $g:\mathbb{R}^d\rightarrow\mathbb{R} \cup \{\infty\}$ is a proximal-friendly convex function \cite{parikh2014proximal}. 


A host of problems in computer science \cite{khot2011grothendieck, lovasz2003semidefinite, zhao1998semidefinite}, machine learning \cite{mossel2015consistency, song2007dependence}, and signal processing~\cite{singer2011angular, singer2011three} naturally fall under the template~\eqref{prob:01},  including max-cut, clustering, generalized eigenvalue decomposition, as well as the {quadratic assignment problem (QAP) \cite{zhao1998semidefinite}}.

To solve \eqref{prob:01}, we 
propose an intuitive and easy-to-implement {augmented Lagrangian} algorithm, and provide its total iteration complexity under an interpretable geometric condition. Before we elaborate on the results, let us first motivate~\eqref{prob:01} with an application to semidefinite programming (SDP): 

\paragraph{Vignette: Burer-Monteiro splitting.}
A powerful convex relaxation for max-cut, clustering, and many others is provided by the trace-constrained SDP 
\begin{equation}
\label{eq:sdp}
\underset{X\in\mathbb{S}^{d \times d}}{\min} \langle C, X \rangle  \subjto  B(X) = b, \,\, \text{tr}(X)\leq \alpha, \,\, X \succeq 0,
\end{equation}
where $C\in \RR^{d\times d}$, $X$ is a positive semidefinite $d\times d$ matrix, 
and ${B}: \mathbb{S}^{d\times d} \to \mathbb{R}^m$ is a linear operator. If the unique-games conjecture is true, the SDP \eqref{eq:sdp} obtains the best possible approximation for the underlying discrete problem~\cite{raghavendra2008optimal}.

Since $d$ is often large, many first- and second-order methods for solving such SDP's are immediately ruled out, not only due to their high computational complexity, but also due to their storage requirements, which are $\mathcal{O}(d^2)$. 

A contemporary challenge in optimization is therefore to solve SDPs using little space and in a scalable fashion. The recent homotopy conditional gradient method, which is based on linear minimization oracles (LMOs), can solve \eqref{eq:sdp} in a small space via sketching \cite{yurtsever2018conditional}. However, such LMO-based methods are extremely slow in obtaining accurate solutions.


A different approach for solving \eqref{eq:sdp}, dating back to~\cite{burer2003nonlinear, burer2005local}, is the so-called Burer-Monteiro (BM) factorization $X=UU^\top$, where $U\in\mathbb{R}^{d\times r}$ and $r$ is selected according to the guidelines in~\cite{pataki1998rank, barvinok1995problems}, which is tight~\cite{waldspurger2018rank}. 
The BM factorization leads to the following  nonconvex problem in the template~\eqref{prob:01}:
\begin{equation}
\label{prob:nc}
\underset{U\in\mathbb{R}^{d \times r}}{\min} \langle C, UU^\top \rangle \subjto B(UU^\top) = b, \,\, \| U\|_F^2 \leq \alpha,
\end{equation}
The BM factorization does not introduce any extraneous local minima~\cite{burer2005local}. Moreover,~\cite{boumal2016non} establishes the connection between the local minimizers of the factorized problem~\eqref{prob:nc} and the global minimizers for~\eqref{eq:sdp}. 
%
%
To solve \eqref{prob:nc}, the inexact Augmented Lagrangian method (iALM) is widely used~\cite{burer2003nonlinear, burer2005local, kulis2007fast}, due to its cheap per iteration cost and its empirical success. 

Every (outer) iteration of iALM calls a solver to solve an intermediate augmented Lagrangian subproblem to near stationarity. The choices include first-order methods, such as the proximal gradient descent \cite{parikh2014proximal}, or second-order methods, such as the trust region method and BFGS~\cite{nocedal2006numerical}.\footnote{BFGS is in fact a quasi-Newton method that emulates second-order information.}

Unlike its convex counterpart~\cite{nedelcu2014computational,lan2016iteration,xu2017inexact}, the convergence rate and the complexity of iALM for ~\eqref{prob:nc} are not well-understood, see Section~\ref{sec:related work} for a review of the related literature. Indeed, addressing this important theoretical gap is one of the contributions of our work. In addition,
{

$\triangleright$ {
We derive the convergence rate of iALM 
to first-order optimality for solving \eqref{prob:01}  or second-order optimality for solving \eqref{prob:01} with $g=0$, and find the total iteration complexity of iALM using different solvers for the augmented Lagrangian subproblems. 
{We provide an extensive comparison with the existing complexity results in optimization, see Section~\ref{sec:related work}}.}  \\[2mm]
$\triangleright$ Our iALM framework is future-proof in the sense that different subsolvers can be substituted. \\[2mm]
$\triangleright$ We propose a geometric condition that simplifies the algorithmic analysis for iALM, and clarify its connection to well-known Polyak-Lojasiewicz \cite{karimi2016linear} and Mangasarian-Fromovitz \cite{bertsekas1982constrained} conditions. 
We also verify this condition for key problems in Appendices \ref{sec:verification2} and \ref{sec:adexp}. 


\vspace{-4mm}
\section{Preliminaries \label{sec:preliminaries}}\vspace{-3mm}
\paragraph{\textbf{{Notation.}}}
We use the notation $\langle\cdot ,\cdot \rangle $ and $\|\cdot\|$ for the {standard inner} product and the norm on $\RR^d$. For matrices, $\|\cdot\|$ and $\|\cdot\|_F$ denote the spectral and the Frobenius norms, respectively.  
For the convex function $g:\RR^d\rightarrow\RR$, the subdifferential set at $x\in \RR^d$ is denoted by $\partial g(x)$ and  we will occasionally use the notation $\partial g(x)/\b = \{ z/\b : z\in \partial g(x)\}$. 
When presenting iteration complexity results, we often use $\widetilde{O}(\cdot)$ which suppresses the logarithmic dependencies.

We denote $\delta_\mathcal{X}:\RR^d\rightarrow\RR$ as the indicator function  of a set $\mathcal{X}\subset\RR^d$. 
%
The distance function from a point $x$ to $\mathcal{X}$ is denoted by $\dist(x,\mathcal{X}) = \min_{z\in \mathcal{X}} \|x-z\|$. 
For integers $k_0 \le k_1$, we use the  notation $[k_0:k_1]=\{k_0,\ldots,k_1\}$. For an operator $A:\RR^d\rightarrow\RR^m$ with components $\{A_i\}_{i=1}^m$,  $DA(x) \in \mathbb{R}^{m\times d}$ denotes the Jacobian of $A$, where the $i$th row of $DA(x)$ is the  vector $\nabla A_i(x) \in \RR^d$. 


\paragraph{Smoothness.}
We assume smooth $f:\RR^d\rightarrow\RR$ and $A:\RR^d\rightarrow \RR^m$; i.e., there exist $\lambda_f,\lambda_A\ge 0$ s.t.
%
\begin{align}
\| \nabla f(x) - \nabla f(x')\|  \le \lambda_f \|x-x'\|, \quad  \| DA(x) - DA(x') \|  \le \lambda_A \|x-x'\|,
\quad  \forall x, x' \in \RR^d .
\label{eq:smoothness basic}
\end{align}

\paragraph{Augmented Lagrangian method (ALM).}
ALM is a classical algorithm, which first appeared in~\cite{hestenes1969multiplier, powell1969method} and extensively studied afterwards in~\cite{bertsekas1982constrained, birgin2014practical}.
For solving \eqref{prob:01}, ALM suggests solving the problem
%
\begin{equation}
\min_{x} \max_y \,\,\mathcal{L}_\beta(x,y) + g(x), 
\label{eq:minmax}
\end{equation}
%
where, for penalty weight $\b>0$,  $\mathcal{L}_\b$ is the corresponding augmented Lagrangian, defined as 
\begin{align}
\label{eq:Lagrangian}
\mathcal{L}_\beta(x,y) := f(x) + \langle A(x), y \rangle +  \frac{\beta}{2}\|A(x)\|^2.
\end{align}
The minimax formulation in \eqref{eq:minmax} naturally suggests the following algorithm for solving \eqref{prob:01}:
\begin{equation}\label{e:exac}
x_{k+1}  \in \underset{x}{\argmin} \,\, \mathcal{L}_{\beta}(x,y_k)+g(x), 
\end{equation}
\begin{equation*}
y_{k+1}   = y_k+\s_k A(x_{k+1}), 
\end{equation*}
where the  dual step sizes  are denoted as $\{\s_k\}_k$.
However, computing $x_{k+1}$ above requires solving the nonconvex problem~\eqref{e:exac} to optimality, which is typically intractable. Instead, it is  often easier to find an approximate first- or second-order stationary point of~\eqref{e:exac}.

Hence, we argue that by gradually improving the stationarity precision and  increasing the penalty weight $\b$ above, we can reach a stationary point of the main problem in~\eqref{eq:minmax}, as detailed  in Section~\ref{sec:AL algorithm}. 

\paragraph{{\textbf{Optimality conditions.}}}
{First-order necessary optimality conditions} for \eqref{prob:01} are well-studied. {Indeed, $x\in \RR^d$ is a first-order stationary point of~\eqref{prob:01} if there exists $y\in \RR^m$ such that 
%
%
\begin{align}
-\nabla_x \mathcal{L}_\beta(x,y) \in \partial g(x),\qquad  A(x) = 0,
\label{e:inclu2}
\end{align}
which is in turn the necessary optimality condition for \eqref{eq:minmax}.} 
Inspired by this, we say that $x$ is an $(\epsilon_f,\b)$ first-order stationary point of \eqref{eq:minmax} if {there exists a $y \in \RR^m$} such that 
\begin{align}
\dist(-\nabla_x \mathcal{L}_\beta(x,y), \partial g(x)) \leq \epsilon_f, \qquad  \| A(x) \| \leq \epsilon_f,
\label{eq:inclu3}
\end{align}
for $\epsilon_f\ge 0$. 
In light of \eqref{eq:inclu3}, a metric for evaluating the stationarity of a pair $(x,y)\in \RR^d\times \RR^m$ is 
\begin{align}
\dist\left(-\nabla_x \mathcal{L}_\beta(x,y), \partial g(x) \right) + \|A(x)\| ,
\label{eq:cvg metric}
\end{align}
which we use as the first-order stopping criterion. 
As an example, for a convex set $\mathcal{X}\subset\RR^d$, suppose that $g = \delta_\mathcal{X}$ is the indicator function on $\mathcal{X}$.
Let also $T_\mathcal{X}(x) \subseteq \RR^d$ denote the tangent cone to $\mathcal{X}$ at $x$, and with $P_{T_\mathcal{X}(x)}:\RR^d\rightarrow\RR^d$ we denote the orthogonal projection onto this tangent cone. Then, for $u\in\RR^d$, it is not difficult to verify that 
\begin{align}\label{eq:dist_subgrad}
\dist\left(u, \partial g(x) \right) = \| P_{T_\mathcal{X}(x)}(u) \|.
\end{align} 
When $g = 0$, a first-order stationary point $x\in \RR^d$ of \eqref{prob:01}   is also second-order stationary if 
%
\begin{equation}
\lambda_{\text{min}}(\nabla _{xx} \mathcal{L}_{\beta}(x,y))\ge 0,
\end{equation}
%
where $\nabla_{xx}\mathcal{L}_\b$ is the Hessian of $\mathcal{L}_\b$ with respect to $x$, and $\lambda_{\text{min}}(\cdot)$ returns the smallest eigenvalue of its argument.
Analogously, $x$ is an $(\epsilon_f, \epsilon_s,\b)$ second-order stationary point if, in addition to \eqref{eq:inclu3}, it holds that  
%
\begin{equation}\label{eq:sec_opt}
\lambda_{\text{min}}(\nabla _{xx} \mathcal{L}_{\beta}(x,y)) \ge -\epsilon_s,
\end{equation}
for  $\epsilon_s\ge 0$. 
%
Naturally, for second-order stationarity, we  use $\lambda_{\text{min}}(\nabla _{xx} \mathcal{L}_{\beta}(x,y))$ as the stopping criterion.

%


\paragraph{{\textbf{Smoothness lemma.}}} This next result controls the smoothness of $\L_\b(\cdot,y)$ for a fixed $y$. The proof is standard but nevertheless is included in Appendix~\ref{sec:proof of smoothness lemma} for completeness. 
\begin{lemma}[\textbf{smoothness}]\label{lem:smoothness}
 For fixed $y\in \RR^m$ and $\rho,\rho'\ge 0$, it holds that 
 \begin{align}
\| \nabla_x \mathcal{L}_{\beta}(x, y)-  \nabla_x \mathcal{L}_{\beta}(x', y) \| \le \lambda_\b \|x-x'\|,
 \end{align}
 for every $x,x' \in \{ x'': \|x''\|\le \rho, \|A(x'') \|\le \rho'\}$, where
 \begin{align}
\lambda_\beta 
 \le \lambda_f + \sqrt{m}\lambda_A \|y\| +  (\sqrt{m}\lambda_A\rho'  +  d \lambda'^2_A )\b
 =: \lambda_f + \sqrt{m}\lambda_A \|y\| +   \lambda''(A,\rho,\rho') \b.
\label{eq:smoothness of Lagrangian}
\end{align}
Above, $\lambda_f,\lambda_A$ were defined in (\ref{eq:smoothness basic}) and 
\begin{align}
\lambda'_A := \max_{\|x\|\le \rho}\|DA(x)\|.
\end{align}
\end{lemma}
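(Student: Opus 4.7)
The plan is to compute $\nabla_x \mathcal{L}_\beta(x,y)$ explicitly from the definition of $\mathcal{L}_\beta$ in \eqref{eq:Lagrangian}, and then bound the norm of the increment $\nabla_x \mathcal{L}_\beta(x,y) - \nabla_x \mathcal{L}_\beta(x',y)$ by a triangle-inequality split that matches the three summands of $\mathcal{L}_\beta$.

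Explicitly, differentiating in $x$ gives
\[
\nabla_x \mathcal{L}_\beta(x,y) \;=\; \nabla f(x) \;+\; DA(x)^\top y \;+\; \beta\, DA(x)^\top A(x).
\]
I would then write
\[
\nabla_x \mathcal{L}_\beta(x,y)-\nabla_x \mathcal{L}_\beta(x',y) \;=\; T_1 + T_2 + \beta T_3,
\]
where $T_1=\nabla f(x)-\nabla f(x')$, $T_2=(DA(x)-DA(x'))^\top y$, and $T_3=DA(x)^\top A(x)-DA(x')^\top A(x')$. The first term is immediately bounded by $\lambda_f\|x-x'\|$ via \eqref{eq:smoothness basic}. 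For $T_2$, the assumption \eqref{eq:smoothness basic} on $DA$ gives $\|T_2\|\le \lambda_A\|y\|\,\|x-x'\|$ in operator norm; since the paper's statement carries a factor $\sqrt{m}$, the step here is to pass through the Frobenius bound (equivalently, apply \eqref{eq:smoothness basic} rowwise to the $m$ components $A_i$, giving $\|DA(x)-DA(x')\|_F\le\sqrt{m}\,\lambda_A\|x-x'\|$) and dominate spectral by Frobenius.

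For $T_3$, I would add and subtract $DA(x')^\top A(x)$ to split it as
\[
T_3 \;=\; \bigl(DA(x)-DA(x')\bigr)^\top A(x) \;+\; DA(x')^\top\bigl(A(x)-A(x')\bigr).
\]
For the first summand, the same Frobenius-level bound on the Jacobian difference together with $\|A(x)\|\le\rho'$ produces a contribution of order $\sqrt{m}\,\lambda_A\rho'\|x-x'\|$. For the second summand, I would bound $\|DA(x')\|\le\lambda'_A$ by the definition of $\lambda'_A$, and bound $\|A(x)-A(x')\|$ by integrating the Jacobian along the segment from $x'$ to $x$, again using $\|DA\|\le\lambda'_A$; this yields a contribution of order $\lambda'^2_A\|x-x'\|$, with any remaining $d$-factor coming from a Frobenius-vs-spectral comparison (bounding $\|DA(x')\|_F\le\sqrt{d}\,\|DA(x')\|$ on the appropriate side). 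Collecting the three contributions and factoring out $\|x-x'\|$ produces exactly the $\lambda_\beta$ of \eqref{eq:smoothness of Lagrangian}.

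The only real obstacle is bookkeeping of the dimension constants $\sqrt{m}$ and $d$: these do not appear if one reads \eqref{eq:smoothness basic} in the spectral sense, so the proof must invoke the rowwise/Frobenius version of the Jacobian-Lipschitz hypothesis (equivalently, componentwise smoothness of each $A_i$) and pay the $\sqrt{m}$ and $\sqrt{d}$ conversion factors at the steps indicated above. Everything else is an application of the triangle inequality and the definitions of $\rho$, $\rho'$, $\lambda_f$, $\lambda_A$, and $\lambda'_A$.
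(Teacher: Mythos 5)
Your proof is correct, but it takes a genuinely different route from the paper's. The paper proves the lemma by computing the Hessian $\nabla_{xx}\mathcal{L}_\beta(x,y) = \nabla^2 f(x) + \sum_i (y_i+\beta A_i(x))\nabla^2 A_i(x) + \beta\sum_i \nabla A_i(x)\nabla A_i(x)^\top$ and bounding its spectral norm by $\lambda_f + \sqrt{m}\lambda_A(\|y\|_1+\beta\|A(x)\|_1)/\sqrt{m}\cdot\sqrt{m} + \beta\|DA(x)\|_F^2$, using $\|y\|_1\le\sqrt{m}\|y\|$ and $\|DA\|_F^2\le d\|DA\|^2$; the Lipschitz constant of the gradient is then read off from the Hessian bound. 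Your argument instead works entirely at first order: you telescope the gradient difference into $T_1+T_2+\beta T_3$ and split $T_3$ by adding and subtracting $DA(x')^\top A(x)$. This buys you two things. First, it does not require $f$ or $A$ to be twice differentiable, only the Lipschitz conditions \eqref{eq:smoothness basic}, whereas the paper's Hessian computation implicitly assumes second derivatives exist. Second, if one reads \eqref{eq:smoothness basic} in the spectral sense your bounds are actually sharper: $\|T_2\|\le\lambda_A\|y\|\,\|x-x'\|$ without the $\sqrt{m}$, and the second summand of $T_3$ contributes $\lambda_A'^2\|x-x'\|$ without the $d$ (the $\sqrt{m}$ and $d$ in \eqref{eq:smoothness of Lagrangian} are artifacts of the paper's componentwise/Frobenius bookkeeping, exactly as you diagnose); since the lemma only asserts an upper bound on $\lambda_\beta$, a sharper constant is of course still a valid proof. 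One small point worth making explicit: when you integrate $DA$ along the segment from $x'$ to $x$ to bound $\|A(x)-A(x')\|$, the segment stays inside the ball $\{\|x''\|\le\rho\}$ by convexity of the ball, so $\|DA\|\le\lambda_A'$ does hold along the whole path; you only need $\|A(x)\|\le\rho'$ at the endpoint, which is given.
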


\vspace{-4mm}
\section{Algorithm \label{sec:AL algorithm}}

To solve the equivalent formulation of \eqref{prob:01}  presented in \eqref{eq:minmax}, we  propose the inexact ALM (iALM), detailed in Algorithm~\ref{Algo:2}. 
At the $k^{\text{th}}$ iteration, Step 2 of Algorithm~\ref{Algo:2} calls a solver that finds an approximate stationary point of the augmented Lagrangian $\L_{\b_k}(\cdot,y_k)$ with the accuracy of $\epsilon_{k+1}$, and this accuracy gradually increases in a controlled fashion. 
The increasing sequence of penalty weights $\{\b_k\}_k$ and the dual update (Steps 4 and 5) are responsible for continuously enforcing the constraints in~\eqref{prob:01}. The appropriate choice for $\{\b_k\}_k$ will be specified in Corrollary  Sections  \ref{sec:first-o-opt} and \ref{sec:second-o-opt}. 

The particular choice of the dual step sizes $\{\s_k\}_k$ in Algorithm~\ref{Algo:2} ensures that the dual variable $y_k$ remains bounded.


\begin{algorithm}[h!]
\begin{algorithmic}
\STATE \textbf{Input:} Non-decreasing, positive, unbounded sequence $\{\b_k\}_{k\ge 1}$, stopping thresholds $\tau_f, \tau_s  > 0$.  \vspace{2pt}
\STATE \textbf{Initialization:} Primal variable $x_{1}\in \RR^d$, dual variable $y_0\in \RR^m$, dual step size $\s_1>0$. 
\vspace{2pt}
\FOR{$k=1,2,\dots$}
\STATE \begin{enumerate}
\item \textbf{(Update tolerance)}  $\epsilon_{k+1} = 1/\b_k$. 
\item  \textbf{(Inexact primal solution)} Obtain $x_{k+1}\in \RR^d$ such that 
\begin{equation*}
\dist(-\nabla_x \L_{\beta_k} (x_{k+1},y_k), \partial g(x_{k+1}) )  \le \epsilon_{k+1}
\end{equation*}
 for first-order stationarity
\begin{equation*}
\lambda_{\text{min}}(\nabla _{xx}\mathcal{L}_{\beta_k}(x_{k+1}, y_k)) \ge  -\epsilon_{k+1}
\end{equation*}
for second-order-stationarity, if $g=0$ in \eqref{prob:01}. 
\item \textbf{(Update dual step size)} 
\begin{align*}
\s_{k+1} & = \s_{1} \min\Big( 
\frac{\|A(x_1)\| \log^2 2 }{\|A(x_{k+1})\| (k+1)\log^2(k+2)} ,1
\Big). 
\end{align*}
\item \textbf{(Dual ascent)}  $y_{k+1} = y_{k} + \sigma_{k+1}A(x_{k+1})$.
\item \textbf{(Stopping criterion)} If 
\begin{align*}
& \dist(-\nabla_x \L_{\b_k}(x_{k+1}),\partial g(x_{k+1}))  +  \|A(x_{k+1})\| \le \tau_f,\nonumber
\end{align*}
for first-order stationarity and if also 
$\lambda_{\text{min}}(\nabla _{xx}\mathcal{L}_{\beta_{k}}(x_{k+1}, y_k)) \geq -\tau_s$ for second-order stationarity, 
 then quit and return $x_{k+1}$ as an (approximate) stationary point of \eqref{eq:minmax}. 
\end{enumerate}
 \ENDFOR
\end{algorithmic}
\caption{Inexact ALM}
\label{Algo:2}
\end{algorithm}


\section{Convergence Rate \label{sec:cvg rate}}
This section presents the total iteration complexity of Algorithm~\ref{Algo:2} for finding first and second-order stationary points of problem \eqref{eq:minmax}. 
All the proofs are deferred to Appendix~\ref{sec:theory}.
Theorem~\ref{thm:main} characterizes the convergence rate of Algorithm~\ref{Algo:2} for finding stationary points in the number of outer iterations.
\begin{theorem}\textbf{\emph{(convergence rate)}}
\label{thm:main}  For integers $2 \le k_0\le k_1$, consider the interval $K=[k_0:k_1]$, and let $\{x_k\}_{k\in K}$ be the output sequence of Algorithm~\ref{Algo:2} on the interval $K$.\footnote{The choice of $k_1 = \infty$ is valid here too.}  Let also $\rho:=\sup_{k\in [K]} \|x_k\|$.\footnote{If necessary, to ensure that $\rho<\infty$, one can add a small factor of $\|x\|^2$ to $\mathcal{L}_{\b}$ in \eqref{eq:Lagrangian}. Then it is easy to verify that the iterates of Algorithm \ref{Algo:2} remain bounded, provided that the initial penalty weight $\beta_0$ is large enough, $\sup_x \|\nabla f(x)\|/\|x\|< \infty$,  $\sup_x \|A(x)\| < \infty$, and $\sup_x \|D A(x)\| <\infty$.  } Suppose that $f$ and $A$ satisfy (\ref{eq:smoothness basic}) and  let 
\begin{align}
\lambda'_f = \max_{\|x\|\le \rho} \|\nabla f(x)\|,\qquad 
\lambda'_A = \max_{\|x\| \le \rho} \|DA(x)\|,
\label{eq:defn_restricted_lipsichtz}
\end{align}
be the (restricted) Lipschitz constants of $f$ and $A$, respectively. 
With $\nu>0$, assume that 
 \begin{align}
\nu \|A(x_k)\|  
& \le \dist\left( -DA(x_k)^\top A(x_k) , \frac{\partial g(x_k)}{ \b_{k-1}}  \right), 
\label{eq:regularity}
\end{align}
for every $k\in K$. We consider two cases:
\begin{itemize}[leftmargin=*]
\item If a first-order solver is used in Step~2, then $x_k$ is an $(\epsilon_{k,f},\b_k)$ first-order stationary point of (\ref{eq:minmax}) with 
\begin{align}
\epsilon_{k,f} & =  \frac{1}{\beta_{k-1}} \left(\frac{2(\lambda'_f+\lambda'_A y_{\max}) (1+\lambda_A' \sigma_k)}{\nu}+1\right) =: \frac{Q(f,g,A,\s_1)}{\beta_{k-1}}, 
\label{eq:stat_prec_first}
\end{align}
for every $k\in K$, where $y_{\max}(x_1,y_0,\s_1):=\|y_0\|+ c  \|A(x_1) \|$.
\item If a second-order solver is used in Step~2, then $x_k$ is an $(\epsilon_{k,f}, \epsilon_{k,s},\b_k)$ second-order stationary point of~(\ref{eq:minmax}) with $\epsilon_{k,s}$ specified above and with 
\begin{align}
\epsilon_{k,s} &= \epsilon_{k-1} + \sigma_k \sqrt{m} \lambda_A \frac{ 2\lambda'_f +2 \lambda'_A y_{\max} }{\nu \b_{k-1}} = \frac{\nu + \sigma_k \sqrt{m} \lambda_A 2\lambda'_f +2 \lambda'_A y_{\max} }{\nu \b_{k-1}}  =: \frac{Q'(f,g,A,\s_1)}{\beta_{k-1}}.
\end{align}
\end{itemize}
\end{theorem}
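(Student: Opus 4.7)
}
The plan is to split the argument into three blocks aligned with Algorithm~\ref{Algo:2}: (a) a uniform bound $y_{\max}$ on $\|y_k\|$ coming from the dual step-size rule in Step~3, (b) a bound on the constraint violation $\|A(x_k)\|$ obtained by feeding the inner solver's stationarity certificate into the regularity condition~\eqref{eq:regularity}, and (c) a Hessian perturbation argument for the second-order statement. For (a), I would telescope $y_k = y_0 + \sum_{j=1}^{k}\sigma_j A(x_j)$ and use Step~3 to observe that for every $j\ge 2$
\begin{equation*}
\sigma_j \|A(x_j)\| \le \frac{\sigma_1 \|A(x_1)\|\log^2 2}{j\,\log^2(j+1)},
\end{equation*}
which is summable; this produces the explicit constant $y_{\max}$ referenced in~\eqref{eq:dual growth} and used in what follows.

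For (b), Step~2 at iteration $k-1$ provides some $\xi_k\in\partial g(x_k)$ with
\begin{equation*}
\bigl\|\nabla f(x_k) + DA(x_k)^\top y_{k-1} + \beta_{k-1}DA(x_k)^\top A(x_k) + \xi_k\bigr\| \le \frac{1}{\beta_{k-1}}.
\end{equation*}
Dividing by $\beta_{k-1}$ and using $\|\nabla f(x_k)\|\le\lambda'_f$, $\|DA(x_k)\|\le\lambda'_A$, and $\|y_{k-1}\|\le y_{\max}$ yields
\begin{equation*}
\dist\bigl(-DA(x_k)^\top A(x_k),\; \partial g(x_k)/\beta_{k-1}\bigr) \le \frac{\lambda'_f+\lambda'_A y_{\max}}{\beta_{k-1}} + \frac{1}{\beta_{k-1}^2}.
\end{equation*}
Invoking~\eqref{eq:regularity} then gives $\|A(x_k)\|\le 2(\lambda'_f+\lambda'_A y_{\max})/(\nu\beta_{k-1})$ once the $1/\beta_{k-1}^2$ term is absorbed into the leading $1/\beta_{k-1}$ term (valid once $\beta_{k-1}$ exceeds a problem-dependent threshold). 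To match the stationarity definition~\eqref{eq:inclu3} at the pair $(x_k,y_k)$, I would use the identity
\begin{equation*}
\nabla_x\mathcal{L}_{\beta_{k-1}}(x_k,y_k) - \nabla_x\mathcal{L}_{\beta_{k-1}}(x_k,y_{k-1}) = \sigma_k\, DA(x_k)^\top A(x_k),
\end{equation*}
which contributes an additional $\lambda'_A \sigma_k\|A(x_k)\|$ to the gradient mismatch. Summing this correction with the inner-solver slack $1/\beta_{k-1}$ and with the $\|A(x_k)\|$ bound above recovers exactly~\eqref{eq:stat_prec_first}: the $(1+\lambda'_A\sigma_k)$ factor is the shifted-dual correction and the trailing ``$+1$'' collects the inner-solver slack itself.

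For (c), the inner solver also certifies $\lambda_{\min}(\nabla_{xx}\mathcal{L}_{\beta_{k-1}}(x_k,y_{k-1}))\ge -1/\beta_{k-1}$. Expanding
\begin{equation*}
\nabla_{xx}\mathcal{L}_{\beta_{k-1}}(x_k,y_k) - \nabla_{xx}\mathcal{L}_{\beta_{k-1}}(x_k,y_{k-1}) = \sum_{i=1}^{m} \sigma_k A_i(x_k)\,\nabla^2 A_i(x_k),
\end{equation*}
bounding the spectral norm of this correction by $\sigma_k\sqrt{m}\lambda_A\|A(x_k)\|$ via Weyl's inequality, and substituting the $\|A(x_k)\|$ bound from (b) produces the stated $\epsilon_{k,s}$; raising the penalty further to $\beta_k$ only adds the positive semidefinite term $(\beta_k-\beta_{k-1})DA(x_k)^\top DA(x_k)$, which can only raise the smallest eigenvalue. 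The main obstacle I anticipate is the indexing bookkeeping that reconciles the solver's guarantee at $(\beta_{k-1},y_{k-1})$ with the stationarity definition at a generic $(\beta,y)$; the explicit $(1+\lambda'_A\sigma_k)$ and $\sigma_k\sqrt{m}\lambda_A$ factors arise precisely from this bookkeeping, together with the absorption of the lower-order $1/\beta_{k-1}^2$ term coming from the regularity step.
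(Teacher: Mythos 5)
Your proposal follows essentially the same route as the paper's proof: bound the dual sequence via the summable step sizes, feed the inner solver's certificate into the regularity condition \eqref{eq:regularity} to control $\|A(x_k)\|$, account for the dual update through the identity $\nabla_x\mathcal{L}_{\beta_{k-1}}(x_k,y_k)-\nabla_x\mathcal{L}_{\beta_{k-1}}(x_k,y_{k-1})=\sigma_k\,DA(x_k)^\top A(x_k)$, and use a Weyl-type perturbation of the Hessian for the second-order claim, with matching constants throughout. The only inaccuracy is in your closing aside: replacing $\beta_{k-1}$ by $\beta_k$ in the Hessian also introduces the term $(\beta_k-\beta_{k-1})\sum_{i=1}^m A_i(x_k)\nabla^2 A_i(x_k)$, which need not be positive semidefinite (it would have to be controlled by the same $\|A(x_k)\|$ bound), though the paper itself simply states its bounds at $\beta_{k-1}$ and never performs this step.
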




Theorem~\ref{thm:main} states that Algorithm~\ref{Algo:2} converges to a (first- or second-) order stationary point of \eqref{eq:minmax} at the rate of $1/\b_k$, further specified in 
Corollary \ref{cor:first} and Corollary \ref{cor:second}.
A few remarks are in order about Theorem \ref{thm:main}.

\paragraph{Regularity.}  The key geometric condition in Theorem~\ref{thm:main} is \eqref{eq:regularity} which, broadly speaking, ensures that the primal updates of Algorithm \ref{Algo:2} reduce the feasibility gap as the penalty weight $\b_k$ grows. We will verify this condition for several examples in Appendices \ref{sec:verification2} and \ref{sec:adexp}.

This condition in \eqref{eq:regularity} is closely related to those in the existing literature. In the special case where $g=0$ in~\eqref{prob:01}, \eqref{eq:regularity} reduces to;
\begin{align}
\|DA(x)^\top A(x)\| \geq \nu \|A(x)\|.
\label{eq:regularity_special}
\end{align}
\emph{Polyak-Lojasiewicz (PL) condition~\cite{karimi2016linear}.} Consider the  problem with $\lambda_{\tilde{f}}$-smooth objective,
\begin{align*}
\min_{x\in\RR^d} \tilde{f}(x).
\end{align*}
$\tilde{f}(x)$ satisfies the PL inequality if the following holds for some $\mu > 0$,
\begin{equation}
\frac{1}{2} \| \nabla \tilde{f}(x) \|^2 \geq \mu (\tilde{f}(x) - \tilde{f}^*),\quad \forall x
\tag{PL inequality}
\end{equation}
This inequality implies that gradient is growing faster than a quadratic as we move away from the optimal. Assuming that the feasible set $\{ x: A(x) = 0\}$ is non-empty, it is easy to verify that \ref{eq:regularity_special} is equivalent to 
 the PL condition for minimizing $\tilde{f}(x) = \frac{1}{2}\|A(x)\|^2$ with $\nu = \sqrt{2 \mu} $~~\cite{karimi2016linear}. 

PL condition itself is a special case of Kurdyka-Lojasiewicz with $\theta = 1/2$, see \cite[Definition 1.1]{xu2017globally}. When $g=0$, it is also easy to see that \eqref{eq:regularity} is  weaker than the Mangasarian-Fromovitz (MF) condition in nonlinear optimization \cite[Assumption 1]{bolte2018nonconvex}. 
{Moreover, {when $g$ is the indicator on a convex set,} \eqref{eq:regularity} is a consequence of the \textit{basic constraint qualification} in \cite{rockafellar1993lagrange}, which itself generalizes the MF condition to the case when $g$ is an indicator function of a convex set.} 

We may think of  \eqref{eq:regularity} as a local condition, which should hold within a neighborhood of the constraint set $\{x:A(x)=0\}$ rather than everywhere in $\mathbb{R}^d$. 
Indeed, the iteration count $k$ appears in \eqref{eq:regularity} to reflect this local nature of the condition. 
Similar kind of arguments on the regularity condition also appear in \cite{bolte2018nonconvex}. 
There is also a constant complexity algorithm in \cite{bolte2018nonconvex} to reach so-called ``information zone'', which supplements Theorem \ref{thm:main}. 
\paragraph{Penalty method.}
A classical algorithm to solve \eqref{prob:01} is the penalty method, which is characterized by the absence of the dual variable ($y=0$) in \eqref{eq:Lagrangian}. Indeed, ALM can be interpreted as an adaptive penalty or smoothing method with a variable center  determined by the dual variable. It is worth noting that, with the same proof technique, one can establish the same convergence rate of Theorem \ref{thm:main} for the penalty method. However, while both methods have the same convergence rate in theory, we ignore the uncompetitive penalty method since it is significantly outperformed by iALM in practice. 

\paragraph{Computational complexity.} Theorem~\ref{thm:main} specifies the number of (outer) iterations that Algorithm~\ref{Algo:2} requires to reach a near-stationary point of problem~\eqref{eq:Lagrangian} with a  prescribed precision and, in particular, specifies the number of calls made to the solver in Step~2.  In this sense, Theorem~\ref{thm:main} does not fully capture the computational complexity of Algorithm~\ref{Algo:2}, as it does not take into account the computational cost of the solver in Step~2. 

To better understand the total iteration complexity of Algorithm~\ref{Algo:2}, we consider two scenarios in the following. In the first scenario, we take the solver in Step~2 to be the Accelerated Proximal Gradient Method (APGM), a well-known first-order algorithm~\cite{ghadimi2016accelerated}. In the second scenario, we will use the second-order trust region method developed in~\cite{cartis2012complexity}.
We have the following two corollaries showing the total complexity of our algorithm to reach first and second-order stationary points. 
Appendix \ref{sec:opt_cnds} contains the proofs and more detailed discussion for the complexity results.

{
\begin{corollary}[First-order optimality]\label{cor:first}
For $b>1$, let $\beta_k =b^k $ for every $k$. If we use APGM from~\cite{ghadimi2016accelerated} for Step~2 of Algorithm~\ref{Algo:2}, the algorithm finds an $(\epsilon_f,\b_k)$ first-order stationary point of~\eqref{eq:minmax}, 
 after $T$ calls to the first-order oracle, where
\begin{equation}
T = \mathcal{O}\left( \frac{Q^3 \rho^2}{\epsilon^{4}}\log_b{\left( \frac{Q}{\epsilon} \right)} \right) =   \tilde{\mathcal{O}}\left( \frac{Q^{3} \rho^2}{\epsilon^{4}} \right). 
\end{equation}
\end{corollary}

}
For Algorithm~\ref{Algo:2} to reach a near-stationary point with an accuracy of $\epsilon_f$ in the sense of \eqref{eq:inclu3} and with the lowest computational cost, we therefore need to perform only one iteration of Algorithm~\ref{Algo:2}, with $\b_1$ specified as a function of $\epsilon_f$ by \eqref{eq:stat_prec_first} in Theorem~\ref{thm:main}. In general, however, the constants in \eqref{eq:stat_prec_first} are unknown and this approach is thus not feasible. Instead, the homotopy approach taken by Algorithm~\ref{Algo:2} ensures achieving the desired accuracy by gradually increasing the penalty weight. This homotopy approach increases the computational cost of Algorithm~\ref{Algo:2} only by a  factor logarithmic in the $\epsilon_f$, as detailed in the proof of Corollary~\ref{cor:first}.

\begin{corollary}[Second-order optimality]\label{cor:second}
For $b>1$, let $\beta_k =b^k $ for every $k$.
We assume that 
\begin{equation}
\mathcal{L}_{\beta}(x_1, y) - \min_{x}\mathcal{L}_{\beta}(x, y) \leq L_{u},\qquad  \forall \beta.
\end{equation}
If we use the trust region method from~\cite{cartis2012complexity} for Step~2 of Algorithm~\ref{Algo:2}, the algorithm finds an $\epsilon$-second-order stationary point of~\eqref{eq:minmax} in $T$ calls to the second-order oracle where
\begin{equation}
T = \mathcal{O}\left( \frac{L_u Q'^{5}}{\epsilon^{5}} \log_b{\left( \frac{Q'}{\epsilon} \right)} \right) = \widetilde{\mathcal{O}}\left( \frac{L_u Q'^{5}}{\epsilon^{5}} \right).
\end{equation}
\end{corollary}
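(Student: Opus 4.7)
The plan is to combine Theorem~\ref{thm:main} (which bounds the number of outer iterations required) with an inner-loop complexity bound for the trust region scheme of~\cite{cartis2012complexity} (which bounds the per-outer-iteration cost), and to sum. The template is exactly the one used for Corollary~\ref{cor:first}, with the first-order solver APGM replaced by the second-order trust region method, and with the first-order stationarity bound replaced by the second-order stationarity bound.

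First, I would fix the number of outer iterations. By the second bullet of Theorem~\ref{thm:main}, the point $x_k$ produced by Algorithm~\ref{Algo:2} satisfies $\epsilon_{k,s}\le Q'/\beta_{k-1}$ and $\epsilon_{k,f}\le Q/\beta_{k-1}$. Since $\beta_k = b^k$, it suffices to take $K=\lceil \log_b(Q'/\epsilon)\rceil$ outer iterations so that $\beta_{K-1}\ge Q'/\epsilon$ and hence $x_K$ is an $\epsilon$-second-order stationary point of~\eqref{eq:minmax} in the sense of \eqref{eq:inclu3}--\eqref{eq:sec_opt}.

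Next, I would quantify the per-outer-iteration cost. At outer iteration $k$, the trust region solver is asked to produce an approximate second-order stationary point of $x\mapsto \mathcal{L}_{\beta_k}(x,y_k)$ to precision $\epsilon_{k+1}=1/\beta_k$. By an argument directly analogous to Lemma~\ref{lem:smoothness} applied one derivative higher (using third-order smoothness of $f$ and $A$ on the compact region determined by $\rho$), the Hessian-Lipschitz constant of $\mathcal{L}_{\beta_k}(\cdot,y_k)$ grows at most linearly in $\beta_k$; the dual variable $y_k$ is bounded along iterations by construction of $\{\s_k\}_k$, so it does not spoil this scaling. Plugging the Hessian-Lipschitz constant, the function-value gap $L_u$ (available by hypothesis), and the tolerance $1/\beta_k$ into the iteration complexity bound of~\cite{cartis2012complexity} yields an inner cost of order $L_u\,\beta_k^{5}$ calls to the second-order oracle at iteration $k$: roughly a factor $\beta_k^{2}$ from the Hessian-related Lipschitz constants and a factor $\beta_k^{3}$ from the $\epsilon'^{-3}$ dependence with $\epsilon'=1/\beta_k$.

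Finally, I would aggregate. Summing over $k=1,\dots,K$ and bounding each term by the last (which, up to a multiplicative constant depending on $b$, dominates the geometric series since $b>1$), the total cost is at most a constant times $K\cdot L_u\,\beta_K^{5}$. Substituting $\beta_K \le b\cdot Q'/\epsilon$ and $K=\mathcal{O}(\log_b(Q'/\epsilon))$ gives the claimed $\tilde{\mathcal{O}}(L_u Q'^{5}/\epsilon^{5})$ bound. The main obstacle I anticipate is the careful bookkeeping in the second step: one must verify that $\|y_k\|$ stays bounded along the outer loop, propagate higher-order smoothness of $f$ and $A$ through $\mathcal{L}_{\beta_k}$, and match these constants with the precise complexity statement of~\cite{cartis2012complexity} so that the exponents line up to produce $1/\epsilon^{5}$. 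The outer-loop count and the geometric-sum aggregation are then routine and mirror the proof of Corollary~\ref{cor:first}.
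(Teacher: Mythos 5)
Your proposal matches the paper's own argument essentially step for step: the paper likewise takes the trust-region inner complexity $\mathcal{O}\bigl(\lambda_{\beta_k,H}^2\,L_u/\epsilon_k^3\bigr)$ from \cite{cartis2012complexity}, notes that the Hessian-Lipschitz constant of $\mathcal{L}_{\beta_k}$ is $\mathcal{O}(\beta_k)$ by an argument analogous to Lemma~\ref{lem:smoothness}, obtains $\mathcal{O}(L_u\beta_k^5)$ per outer iteration, and sums over $K=\log_b(Q'/\epsilon)$ outer iterations determined by Theorem~\ref{thm:main}, exactly as in the proof of Corollary~\ref{cor:first}. The bookkeeping concerns you flag (boundedness of $y_k$, propagation of higher-order smoothness) are handled in the paper precisely as you anticipate, so the proposal is correct and takes the same route.
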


\paragraph{Remark.} 
These complexity results for first and second-order are stationarity with respect to~\eqref{eq:Lagrangian}.
{We note that second order complexity result matches~\cite{cartis2018optimality} and~\cite{birgin2016evaluation}.}
However, the stationarity criteria and the definition of dual variable in these papers differ from ours.
We include more discussion on this in the Appendix.

\paragraph{Effect of  $\beta_k$ in \ref{eq:regularity}.}
 We consider two cases, when $g$ is the indicator of a convex set (or 0), the subdifferential set will be a cone (or 0), thus $\beta_k$ will not have an effect. On the other hand, when $g$ is a convex and Lipschitz contiunous function defined on the whole space, subdifferential set will be bounded \cite[Theorem 23.4]{rockafellar1970convex}. This will introduce an error term in  \ref{eq:regularity} that is of the order (1/$\beta_k$). One can see that $b^k$ choice for $\beta_k$ causes a linear decrease in this error term. In fact, all the examples in this paper fall into the first case.


\section{Related Work \label{sec:related work}} 

{{
ALM has a long history in the optimization literature, dating back to~\cite{hestenes1969multiplier, powell1969method}.
In the special case of~\eqref{prob:01} with a convex function $f$ and a linear operator $A$, standard,  inexact, and linearized versions of ALM have been extensively studied~\cite{lan2016iteration,nedelcu2014computational,tran2018adaptive,xu2017inexact}.

Classical works on ALM focused on the general template of~\eqref{prob:01} with nonconvex $f$ and nonlinear $A$, with arguably stronger assumptions and required exact solutions to  the subproblems of the form \eqref{e:exac}, which appear in Step 2 of Algorithm~\ref{Algo:2}, see for instance  \cite{bertsekas2014constrained}.

A similar analysis was conducted in~\cite{fernandez2012local} for the general template of~\eqref{prob:01}.
The authors considered inexact ALM and proved convergence rates for the outer iterates, under specific assumptions on the initialization of the dual variable. 
However, in contrast, the authors did not analyze how to solve the subproblems inexactly and  did not provide total complexity results with verifiable conditions.

Problem~\eqref{prob:01} with similar assumptions to us is also studied in~\cite{birgin2016evaluation} and~\cite{cartis2018optimality} for first-order and second-order stationarity, respectively, with explicit iteration complexity analysis.
{As we have mentioned in Section~\ref{sec:cvg rate}, our second order iteration complexity result matches these theoretical algorithms with a simpler algorithm and a simpler analysis.}
In addition, these algorithms require setting final accuracies since they utilize this information in the algorithm while our Algorithm~\ref{Algo:2} does not set accuracies a priori.

\cite{cartis2011evaluation} also considers the same template~\eqref{prob:01} for first-order stationarity with a penalty-type method instead of ALM.
Even though the authors show $\mathcal{O}(1/\epsilon^2)$ complexity, this result is obtained by assuming that the penalty parameter remains bounded. We note that such an assumption can also be used to improve our complexity results to match theirs.

\cite{bolte2018nonconvex} studies the general template~\eqref{prob:01} with specific assumptions involving local error bound conditions for the~\eqref{prob:01}.
These conditions are studied in detail in~\cite{bolte2017error}, but their validity for general SDPs~\eqref{eq:sdp} has never been established. This work also lacks the total iteration complexity analysis presented here. 

Another work~\cite{clason2018acceleration} focused on solving~\eqref{prob:01} by adapting the primal-dual method of Chambolle and Pock~\cite{chambolle2011first}.
The authors proved the convergence of the method and provided convergence rate by imposing error bound conditions on the objective function that do not hold for standard SDPs.

\cite{burer2003nonlinear, burer2005local} is the first work that proposes the splitting $X=UU^\top$ for solving SDPs of the form~\eqref{eq:sdp}.
Following these works, the literature on Burer-Monteiro (BM) splitting for the large part focused on using ALM for solving the reformulated problem~\eqref{prob:nc}.
However, this proposal has a few drawbacks: First, it requires exact solutions in Step 2 of Algorithm~\ref{Algo:2} in theory, which in practice is replaced with inexact solutions. Second, their results only establish convergence without providing the rates.  In this sense, our work provides a theoretical understanding of the BM splitting with inexact solutions to Step 2 of Algorithm~\ref{Algo:2} and complete iteration complexities.

\cite{bhojanapalli2016dropping, park2016provable} are among the earliest efforts to show convergence rates for BM splitting, focusing on the special case of SDPs without any linear constraints.
For these specific problems, they prove the convergence of gradient descent to global optima with convergence rates, assuming favorable initialization.
These results, however, do not apply to general SDPs of the form~\eqref{eq:sdp} where the difficulty arises due to the linear constraints.

Another popular method for solving SDPs are due to~\cite{boumal2014manopt, boumal2016global, boumal2016non}, focusing on the case where the constraints in~\eqref{prob:01} can be written as a  Riemannian manifold after BM splitting.
In this case, the authors apply the Riemannian gradient descent and Riemannian trust region methods for obtaining first- and second-order stationary points, respectively. 
They obtain~$\mathcal{O}(1/\epsilon^2)$ complexity for finding first-order stationary points  and~$\mathcal{O}(1/\epsilon^3)$ complexity for finding second-order stationary points. 

While these complexities appear better than ours, the smooth manifold requirement in these works is indeed restrictive. In particular, this requirement holds for max-cut and generalized eigenvalue problems, but it is not satisfied for other important SDPs such as quadratic programming (QAP), optimal power flow and clustering with general affine constraints. 
In addition, as noted in~\cite{boumal2016global}, per iteration cost of their method for max-cut problem is an astronomical~$\mathcal{O}({d^6})$.

Lastly, there also exists a line of work for solving SDPs in their original convex formulation, in a storage efficient way~\cite{nesterov2009primal, yurtsever2015universal, yurtsever2018conditional}. These works have global optimality guarantees by their virtue of directly solving the convex formulation. On the downside, these works require the use of eigenvalue routines and exhibit significantly slower convergence as compared to nonconvex approaches~\cite{jaggi2013revisiting}.
}
}\\[-2.5em]
\section{Numerical Evidence \label{sec:experiments}}

We first begin with a caveat:  It is known that quasi-Newton methods, such as BFGS and lBFGS, might not converge for nonconvex problems~\cite{dai2002convergence, mascarenhas2004bfgs}. For this reason, we have used the trust region method as the second-order solver in our  analysis in Section~\ref{sec:cvg rate}, which is well-studied for nonconvex problems~\cite{cartis2012complexity}.  Empirically, however, BFGS and lBGFS are extremely successful and we have therefore opted for those solvers in this section since the subroutine does not affect Theorem~\ref{thm:main} as long as the subsolver performs well in practice. 

\subsection{Clustering}
Given data points $\{z_i\}_{i=1}^n $, the entries of the corresponding Euclidean distance matrix $D \in \RR^{n\times n}$ are $ D_{i, j} =  \left\| z_i - z_j\right\|^2 $. 
Clustering is then the problem of finding a co-association matrix $Y\in \RR^{n\times n}$ such that $Y_{ij} = 1$ if points $z_i$ and $z_j$ are within the same cluster and $Y_{ij} = 0$ otherwise. In~\cite{Peng2007}, the authors provide  a SDP relaxation of the clustering problem, specified as 
\begin{align}
\underset{Y \in \RR^{nxn}}{\min} \text{tr}(DY) \subjto Y\mathbf{1} = \mathbf{1},
~\text{tr}(Y) = s,~
 Y\succeq 0,~Y \geq 0,
\label{eq:sdp_svx}
\end{align}
where $s$ is the number of clusters and $Y  $ is both positive semidefinite and has nonnegative entries. 
Standard  SDP solvers do not scale well with the number of data points~$n$, since they often require projection onto the semidefinite cone with the complexity of $\mathcal{O}(n^3)$. We instead use the BM factorization to solve \eqref{eq:sdp_svx}, sacrificing convexity to reduce the computational complexity. More specifically, we solve the program
\begin{align}
\label{eq:nc_cluster}
\underset{V \in \RR^{n\times r}}{\min} \text{tr}(DVV^{\top}) \subjto
VV^{\top}\mathbf{1} = \mathbf{1},~~ \|V\|_F^2 \le  s, 
~~V  \geq 0,
\end{align}
where $\mathbf{1}\in \RR^n$ is the vector of all ones.
Note that $Y \geq 0$ in \eqref{eq:sdp_svx} is replaced above by the much stronger but easier-to-enforce constraint $V \geq 0$ in \eqref{eq:nc_cluster}, see~\cite{kulis2007fast} for the reasoning behind this relaxation.
Now, we can cast~\eqref{eq:nc_cluster} as an instance of~\eqref{prob:01}. Indeed, for every $i\le n$, let $x_i \in \RR^r$ denote the $i$th row of $V$. We next form $x \in \RR^d$ with $d = nr$ by expanding the factorized variable $V$, namely, 
$
x := [x_1^{\top}, \cdots, x_n^{\top}]^{\top} \in \RR^d,
$
and then set 
\begin{align*}
f(x) =\sum_{i,j=1}^n D_{i, j} \left\langle x_i, x_j \right\rangle,
\qquad g = \delta_C, \qquad
A(x) = [x_1^{\top}\sum_{j=1}^n x_j -1, \cdots, x_n^{\top}\sum_{j=1}^n x_j-1]^{\top},
\end{align*}
where $C$ is the intersection of the positive orthant in $\RR^d$ with the Euclidean  ball of radius $\sqrt{s}$. In Appendix~\ref{sec:verification2}, {we verify that Theorem~\ref{thm:main} applies to~\eqref{prob:01} with $f,g,A$ specified above. }

\begin{figure}[]
\begin{center}
{\includegraphics[width=.4\columnwidth]{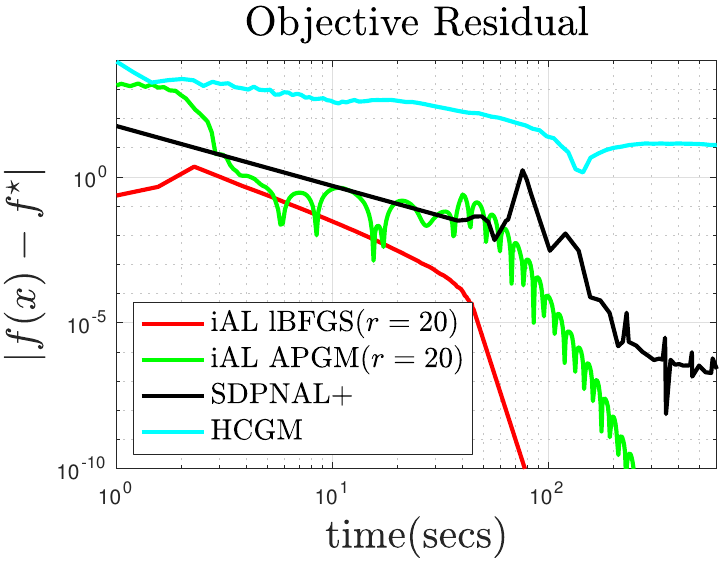}}
{\includegraphics[width=.4\columnwidth]{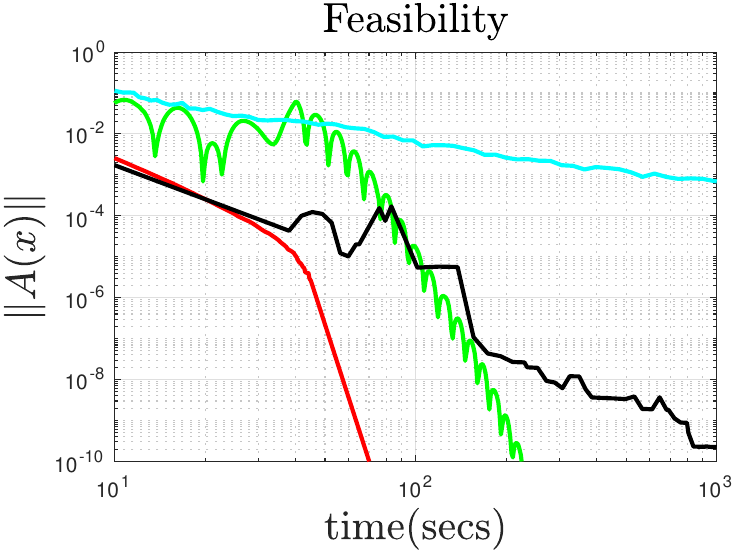}}
\caption{Clustering running time comparison. }
\label{fig:clustering}
\end{center}
\end{figure}

In our simulations, we use two different solvers for Step~2 of  Algorithm~\ref{Algo:2}, namely, APGM and lBFGS. APGM is a  solver for nonconvex problems of the form~\eqref{e:exac} with convergence guarantees to first-order stationarity, as discussed in Section~\ref{sec:cvg rate}. lBFGS is a limited-memory version of BFGS algorithm in~\cite{fletcher2013practical} that approximately leverages the second-order information of the problem. 
We compare our approach against the following convex methods:
\begin{itemize}
\item HCGM: Homotopy-based Conditional Gradient Method in~\cite{yurtsever2018conditional} which directly solves~\eqref{eq:sdp_svx}. 
\item SDPNAL+: A second-order augmented Lagrangian method for solving SDP's with nonnegativity constraints~\cite{yang2015sdpnal}. 
\end{itemize}

As for the dataset, our experimental setup is similar to that described by~\cite{mixon2016clustering}. We use the publicly-available fashion-MNIST data in \cite{xiao2017/online}, which is released as a possible replacement for the MNIST handwritten digits. Each data point is a $28\times 28$ gray-scale image, associated with a label from ten classes, labeled from $0$ to $9$. 
First, we extract the meaningful features from this dataset using a simple two-layer neural network with a sigmoid activation function. Then, we apply this neural network to 1000 test samples from the same dataset, which gives us a vector of length $10$ for each data point, where each entry represents the posterior probability for each class.  Then, we form the $\ell_2$ distance matrix ${D}$ from these probability vectors. 
The solution rank for the template~\eqref{eq:sdp_svx} is known and  it is equal to number of clusters $k$ \cite[Theorem~1]{kulis2007fast}. As discussed in~\cite{tepper2018clustering}, setting rank $r>k$ leads more accurate reconstruction in expense of speed. 
Therefore, we set the rank to 20. 
For iAL lBFGS, we used $\beta_1 = 1$ and $\sigma_1 = 10$ as the initial penalty weight and dual step size, respectively. For HCGM, we used $\beta_0 = 1$ as the initial smoothness parameter. We have run SDPNAL+ solver with $10^{-12}$ tolerance. 
The results are depicted in Figure~\ref{fig:clustering}.
We implemented 3 algorithms on MATLAB and used the software package for SDPNAL+ which contains mex files. 
It is predictable that the performance of our nonconvex approach would even improve by using mex files.

\subsection{Additional demonstrations}
We provide several additional experiments in Appendix \ref{sec:adexp}. 
Section  \ref{sec:bp} discusses a novel nonconvex relaxation of the standard basis pursuit template which performs comparable to the state of the art convex solvers. 
In Section \ref{sec:geig}, we provide fast numerical solutions to the generalized eigenvalue problem. 
In Section \ref{sec:gan}, we give a contemporary application example that our template applies, namely,  denoising with generative adversarial networks. 
Finally, we provide improved bounds for sparse quadratic assignment problem instances in Section \ref{sec:qap}.


\section{Conclusions}

In this work, we have proposed and analyzed an inexact augmented Lagrangian method for solving nonconvex optimization problems with nonlinear constraints.
We prove convergence to the first and second order stationary points of the augmented Lagrangian function, with explicit complexity estimates.
Even though the relation of stationary points and global optima is not well-understood in the literature, we find out that the algorithm has fast convergence behavior to either global minima or local minima in a wide variety of numerical experiments.

\section*{Acknowledgements}
\vspace{-2mm}
The authors would like to thank Nicolas Boumal and Nadav Hallak for the helpful suggestions. \\ 
~\\
This project has received funding from the European Research Council (ERC) under the European Union's Horizon $2020$ research and innovation programme (grant agreement n$^{\circ}$ $725594$ - time-data) and was supported by the Swiss National Science Foundation (SNSF) under  grant number $200021\_178865 / 1$. This project was also sponsored by the Department of the Navy, Office of Naval Research (ONR)  under a grant number N$62909$-$17$-$1$-$2111$ and was supported by Hasler Foundation Program: Cyber Human Systems (project number $16066$). This research was supported by the PhD fellowship program of  the Swiss Data Science Center (SDSC) under grant lD number P18-07.

\bibliographystyle{abbrv}      
\bibliography{bibliography.bib}   

\newpage
\appendix

\section{Complexity Results}\label{sec:opt_cnds}

\subsection{First-Order Optimality \label{sec:first-o-opt}}

Let us first consider the case where the solver in Step~2 is is the first-order algorithm APGM, described in detail in ~\cite{ghadimi2016accelerated}. At a high level, 
APGM makes use of $\nabla_x \mathcal{L}_{\beta}(x,y)$ in \eqref{eq:Lagrangian}, the proximal operator $\text{prox}_g$, and the classical Nesterov acceleration~\cite{nesterov1983method} to reach first-order stationarity for the subproblem in~\eqref{e:exac}.
Suppose that $g=\delta_\mathcal{X}$ is the indicator function on a bounded convex set $\mathcal{X}\subset \RR^d$ and let 
\begin{align}
\rho= \max_{x\in \mathcal{X}}\|x\|,
\end{align}
be the radius of a ball centered at the origin that includes $\mathcal{X}$. 
 Then, adapting the results in~\cite{ghadimi2016accelerated} to our setup, APGM reaches $x_{k}$ in Step 2 of Algorithm~\ref{Algo:2} {after 
\begin{equation}
\mathcal{O}\left ( \frac{\lambda_{\beta_{k}}^2 \rho^{2} }{\epsilon_{k+1}^2} \right)
\label{eq:iter_1storder}
\end{equation}}
(inner) iterations, where $\lambda_{\beta_{k}}$ denotes the Lipschitz constant of $\nabla_x{\mathcal{L}_{\beta_{k}}(x, y)}$, bounded in~\eqref{eq:smoothness of Lagrangian}. For the clarity of the presentation, we have used a looser bound in \eqref{eq:iter_1storder} compared to~\cite{ghadimi2016accelerated}. 
Using \eqref{eq:iter_1storder}, we derive the following corollary, describing the total iteration complexity of Algorithm~\ref{Algo:2} in terms of the number calls made to the first-order oracle in APGM. 

\begin{corollary}\label{cor:first_supp}
For $b>1$, let $\beta_k =b^k $ for every $k$. If we use APGM from~\cite{ghadimi2016accelerated} for Step~2 of Algorithm~\ref{Algo:2}, the algorithm finds an $(\epsilon_f,\b_k)$ first-order stationary point, 
 after $T$ calls to the first-order oracle, where
{

\begin{equation}
T = \mathcal{O}\left( \frac{Q^3 \rho^2}{\epsilon^{4}}\log_b{\left( \frac{Q}{\epsilon} \right)} \right) =   \tilde{\mathcal{O}}\left( \frac{Q^{3} \rho^2}{\epsilon^{4}} \right). 
\end{equation}}
\end{corollary}

\begin{proof}
Let $K$ denote the number of (outer) iterations of Algorithm~\ref{Algo:2} and let $\epsilon_{f}$ denote the desired accuracy of Algorithm~\ref{Algo:2}, see~(\ref{eq:inclu3}). Recalling Theorem~\ref{thm:main}, we can then write that 
\begin{equation}
 \epsilon_{f} = \frac{Q}{\b_{K}},
 \label{eq:acc_to_b}
\end{equation}
or, equivalently, $\b_{K} = Q/\epsilon_{f}$. 
We now count the number of total (inner) iterations $T$ of Algorithm~\ref{Algo:2} to reach the accuracy $\epsilon_{f}$. From \eqref{eq:smoothness of Lagrangian} and for sufficiently large $k$, recall that $\lambda_{\b_k}\le \lambda'' \b_k$ is the smoothness parameter of the augmented Lagrangian. Then, from \eqref{eq:iter_1storder} ad by summing over the outer iterations, we bound the total number of (inner) iterations of Algorithm~\ref{Algo:2}  as
{
\begin{align}\label{eq: tk_bound}
T &= \sum_{k=1}^K\mathcal{O}\left ( \frac{\lambda_{\beta_{k-1}}^2 \rho^2 }{\epsilon_k^2} \right) \nonumber\\
& = \sum_{k=1}^K\mathcal{O}\left (\beta_{k-1}^4 \rho^2  \right) 
\qquad \text{(Step 1 of Algorithm \ref{Algo:2})}
\nonumber\\
&   \leq  \mathcal{O} \left(K\beta_{K-1}^4 \rho^2 \right)
\qquad \left( \{\b_k\}_k \text{ is increasing} \right)
 \nonumber\\
 & \le \mathcal{O}\left( \frac{K Q^{{3}}  \rho^2}{\epsilon_{f}^{{4}}} \right).
 \qquad \text{(see \eqref{eq:acc_to_b})}
\end{align}
In addition, if we specify $\beta_k=b^k$ for all $k$, we can further refine $T$. Indeed, 
\begin{equation}
\beta_K = b^K~~ \Longrightarrow~~ K = \log_b \left( \frac{Q}{\epsilon_f} \right),
\end{equation}
which, after substituting into~\eqref{eq: tk_bound} gives the final bound in Corollary~\ref{cor:first}.
}
\end{proof}


\subsection{Second-Order Optimality \label{sec:second-o-opt}}
Let us now consider the second-order optimality case where the solver in Step~2 is the the trust region method developed in~\cite{cartis2012complexity}.
Trust region method minimizes a quadratic approximation of the function within a dynamically updated trust-region radius.
Second-order trust region method that we consider in this section makes use of Hessian (or an approximation of Hessian) of the augmented Lagrangian in addition to first order oracles.

As shown in~\cite{nouiehed2018convergence}, finding approximate second-order stationary points of convex-constrained problems is in general NP-hard. For this reason, we focus in this section on the special case of~\eqref{prob:01} with $g=0$. 

%

Let us compute the total computational complexity of Algorithm~\ref{Algo:2} with the trust region method in Step~2, in terms of the number of calls made to the second-order oracle.  By adapting the result in~\cite{cartis2012complexity} to our setup, we find that the number of (inner) iterations required in Step~2 of Algorithm~\ref{Algo:2} to produce $x_{k+1}$ is
\begin{equation}
\mathcal{O}\left ( \frac{\lambda_{\beta_{k}, H}^2 (\mathcal{L}_{\beta_{k}}(x_1, y) - \min_{x}\mathcal{L}_{\beta_k}(x, y))}{\epsilon_k^3} \right),
\label{eq:sec_inn_comp}
\end{equation}
%
where $\lambda_{\beta, H}$ is the Lipschitz constant of the Hessian of the augmented Lagrangian, which is of the order of $\beta$, as can be proven similar to Lemma~\ref{lem:smoothness} and $x_1$ is the initial iterate of the given outer loop.
In~\cite{cartis2012complexity}, the term $\mathcal{L}_{\beta}(x_1, y) - \min_{x}\mathcal{L}_{\beta}(x, y)$ is bounded by a constant independent of $\epsilon$.
We assume a uniform bound for this quantity for every $ \beta_k$, instead of for one value of $\beta_k$ as in~\cite{cartis2012complexity}. Using \eqref{eq:sec_inn_comp} and Theorem~\ref{thm:main}, we arrive at the following: 
%
\begin{corollary}\label{cor:second_supp}
For $b>1$, let $\beta_k =b^k $ for every $k$.
We assume that 
\begin{equation}
\mathcal{L}_{\beta}(x_1, y) - \min_{x}\mathcal{L}_{\beta}(x, y) \leq L_{u},\qquad  \forall \beta.
\end{equation}
If we use the trust region method from~\cite{cartis2012complexity} for Step~2 of Algorithm~\ref{Algo:2}, the algorithm finds an $\epsilon$-second-order stationary point of~(\ref{prob:01}) in $T$ calls to the second-order oracle where
\begin{equation}
T = \mathcal{O}\left( \frac{L_u Q'^{5}}{\epsilon^{5}} \log_b{\left( \frac{Q'}{\epsilon} \right)} \right) = \widetilde{\mathcal{O}}\left( \frac{L_u Q'^{5}}{\epsilon^{5}} \right).
\end{equation}
\end{corollary}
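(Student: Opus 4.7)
The plan is to mirror the proof of Corollary~\ref{cor:first_supp} almost verbatim, swapping the inner‐iteration count for APGM with the one for the trust region method of \cite{cartis2012complexity} and using the second‐order tolerance from Theorem~\ref{thm:main} in place of the first‐order one. So the logical skeleton is: (i) use Theorem~\ref{thm:main} to convert a target outer accuracy $\epsilon$ into a required terminal penalty weight $\beta_K$; (ii) bound the per–outer–iteration inner cost via the trust region complexity result; (iii) sum the geometric series and substitute $\beta_K$ back in.

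For step (i), Theorem~\ref{thm:main} gives $\epsilon_{k,s} = Q'/\beta_{k-1}$, so to guarantee second-order stationarity at level $\epsilon$ it suffices to take $\beta_{K-1} \ge Q'/\epsilon$, and with the geometric schedule $\beta_k = b^k$ this requires $K = \log_b(Q'/\epsilon) + O(1)$ outer iterations. For step (ii), I would plug the adapted bound \eqref{eq:sec_inn_comp} into Step~2 of Algorithm~\ref{Algo:2}: the number of second-order oracle calls in outer iteration $k$ is
\begin{equation*}
\mathcal{O}\!\left(\frac{\lambda_{\beta_k,H}^2 \bigl(\mathcal{L}_{\beta_k}(x_1,y)-\min_x\mathcal{L}_{\beta_k}(x,y)\bigr)}{\epsilon_k^3}\right)
\;\le\;
\mathcal{O}\!\left(\frac{\lambda_{\beta_k,H}^2 L_u}{\epsilon_k^3}\right),
\end{equation*}
where the bound on the potential reduction uses precisely the assumption $\mathcal{L}_\beta(x_1,y)-\min_x \mathcal{L}_\beta(x,y) \le L_u$ uniformly in $\beta$. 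From Step~1 of Algorithm~\ref{Algo:2} one has $\epsilon_k = 1/\beta_{k-1}$, and (by the same argument as in Lemma~\ref{lem:smoothness} applied to one more derivative) $\lambda_{\beta_k,H} = \mathcal{O}(\beta_k)$ on the bounded set containing the iterates. Thus the per-iteration cost is $\mathcal{O}(\beta_k^2 \beta_{k-1}^3 L_u)$.

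For step (iii), I sum over $k=1,\dots,K$ and use monotonicity of $\{\beta_k\}$ to obtain
\begin{equation*}
T \;=\; \sum_{k=1}^K \mathcal{O}\!\left(\beta_k^2\beta_{k-1}^3 L_u\right)
\;\le\; \mathcal{O}\!\left(K\,\beta_K^{5}\, L_u\right)
\;=\; \mathcal{O}\!\left(\frac{K L_u Q'^{5}}{\epsilon^{5}}\right),
\end{equation*}
and substitute $K = \log_b(Q'/\epsilon)$ to recover the advertised $\widetilde{\mathcal{O}}(L_u Q'^{5}/\epsilon^{5})$ bound.

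The only part that is not a mechanical copy of the first-order argument is the Hessian Lipschitz estimate $\lambda_{\beta,H} = \mathcal{O}(\beta)$, which is asserted in the paragraph preceding the corollary but not formally proved; the main obstacle is to justify this bound carefully (an extra derivative of the quadratic penalty $\tfrac{\beta}{2}\|A(x)\|^2$ produces terms scaling linearly in $\beta$ provided $\|A(x)\|$, $\|DA(x)\|$, and $\|D^2 A(x)\|$ remain bounded on the restricted domain of Theorem~\ref{thm:main}, exactly as in the proof of Lemma~\ref{lem:smoothness}). The uniform potential bound $L_u$ is assumed, so it contributes no further work; boundedness of the iterates needed to apply the restricted Hessian-Lipschitz estimate is handled by the same footnote device used in Theorem~\ref{thm:main}.
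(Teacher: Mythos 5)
Your proposal is correct and follows essentially the same route as the paper: the paper likewise converts the target accuracy to $\beta_K = Q'/\epsilon$ via Theorem~\ref{thm:main}, bounds the per-outer-iteration cost by $\mathcal{O}(\lambda_{\beta_k,H}^2 L_u/\epsilon_k^3)=\mathcal{O}(\beta_k^5 L_u)$ using \eqref{eq:sec_inn_comp} with the uniform bound $L_u$ and $\lambda_{\beta,H}=\mathcal{O}(\beta)$, and sums the geometric schedule exactly as in Corollary~\ref{cor:first_supp}. The one step you flag as needing care, the Hessian-Lipschitz estimate $\lambda_{\beta,H}=\mathcal{O}(\beta)$, is also left unproved in the paper (it is asserted ``as can be proven similar to Lemma~\ref{lem:smoothness}''), so your treatment matches the paper's level of rigor.
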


%
Before closing this section, we note that the remark after Corollary~\ref{cor:first} applies here as well. 

\subsection{Approximate optimality of \eqref{prob:01}.}
Corollary \ref{cor:first} establishes the iteration complexity of Algorithm~\ref{Algo:2} to reach approximate first-order stationarity for the equivalent formulation of \eqref{prob:01} presented in \eqref{eq:minmax}. Unlike the exact case, approximate first-order stationarity in  \eqref{eq:minmax} does not immediately lend itself to approximate stationarity in \eqref{prob:01}, and the study of approximate stationarity for the penalized problem (special case of our setting with dual variable set to $0$) has also precedent in~\cite{bhojanapalli2018smoothed}. 
For a precedent in convex optimization for relating the convergence in augmented Lagrangian to the constrained problem using duality, see~\cite{tran2018smooth}.
For the second-order case, it is in general not possible to establish approximate second-order optimality for \eqref{eq:minmax} from Corollary~\ref{cor:second}, with the exception of linear constraints. \cite{nouiehed2018convergence} provides an hardness result by showing that checking an approximate second-order stationarity is NP-hard.
}

\section{Proof of Theorem \ref{thm:main} \label{sec:theory}}

For every $k\ge2$, recall from (\ref{eq:Lagrangian}) and Step~2 of Algorithm~\ref{Algo:2} that $x_{k}$ satisfies 
\begin{align}
& \dist(-\nabla f(x_k) - DA(x_k)^\top y_{k-1} \nonumber\\
& \qquad - \b_{k-1} DA(x_{k})^\top A(x_k) ,\partial g(x_k) )  \nonumber\\
& = \dist(-\nabla_x \L_{\b_{k-1}} (x_k ,y_{k-1}) ,\partial g(x_k) ) \le \epsilon_{k}.  
\end{align}
With an application of the triangle inequality, it follows that 
\begin{align}
& \dist( -\b_{k-1} DA(x_k)^\top A(x_k) , \partial g(x_k) ) \nonumber\\
& \qquad \le \| \nabla f(x_k )\| + \| DA(x_k)^\top y_{k-1}\| + 
\epsilon_k,
\end{align}
which in turn implies that 
\begin{align}
& \dist( -DA(x_k)^\top A(x_k) , \partial g(x_k)/ \b_{k-1}  ) \nonumber\\
&  \le \frac{ \| \nabla f(x_k )\|}{\b_{k-1} } + \frac{\| DA(x_k)^\top y_{k-1}\|}{\b_{k-1} } + 
\frac{\epsilon_k}{\b_{k-1} } \nonumber\\
& \le \frac{\lambda'_f+\lambda'_A \|y_{k-1}\|+\epsilon_k}{\b_{k-1}} ,
\label{eq:before_restriction}
\end{align}
where $\lambda'_f,\lambda'_A$ were defined in \eqref{eq:defn_restricted_lipsichtz}. 
We next translate \eqref{eq:before_restriction} into a bound on the feasibility gap  $\|A(x_k)\|$. Using the regularity condition \eqref{eq:regularity},  the left-hand side of \eqref{eq:before_restriction} can be bounded below as 
\begin{align}
& \dist( -DA(x_k)^\top A(x_k) , \partial g(x_k)/ \b_{k-1}  ) \ge \nu \|A(x_k) \|.
\qquad \text{(see (\ref{eq:regularity}))} 
\label{eq:restrited_pre}
\end{align}
By substituting \eqref{eq:restrited_pre} back into \eqref{eq:before_restriction}, we find that 
\begin{align}
\|A(x_k)\| \le \frac{ \lambda'_f + \lambda'_A \|y_{k-1}\| + \epsilon_k}{\nu \b_{k-1} }. 
\label{eq:before_dual_controlled}
\end{align}
In words, the feasibility gap is directly controlled by the dual sequence $\{y_k\}_k$. We next establish that the dual sequence is bounded. Indeed, for every $k\in K$, note that 
\begin{align}
\|y_k\| & = \| y_0 + \sum_{i=1}^{k} \s_i A(x_i) \| 
\quad \text{(Step 5 of Algorithm \ref{Algo:2})}
\nonumber\\
& \le \|y_0\|+ \sum_{i=1}^k \s_i \|A(x_i)\|
\qquad \text{(triangle inequality)} \nonumber\\
& \le \|y_0\|+ \sum_{i=1}^k \frac{ \|A(x_1)\| \log^2 2 }{ k \log^2(k+1)} 
\quad \text{(Step 4)}
\nonumber\\
& \le \|y_0\|+ c  \|A(x_1) \| \log^2 2 =: y_{\max},
\label{eq:dual growth}
\end{align}  
where 
\begin{align}
c \ge \sum_{i=1}^{\infty} \frac{1}{k \log^2 (k+1)}. 
\end{align}
Substituting \eqref{eq:dual growth} back into \eqref{eq:before_dual_controlled}, we reach
\begin{align}
\|A(x_k)\| &  \le \frac{ \lambda'_f + \lambda'_A y_{\max} + \epsilon_k}{\nu \b_{k-1} } \nonumber\\
& \le  \frac{ 2\lambda'_f +2 \lambda'_A y_{\max} }{\nu \b_{k-1} } ,
\label{eq:cvg metric part 2}
\end{align}
where the second line above holds if $k_0$ is large enough, which would in turn guarantees that $\epsilon_k=1/\b_{k-1}$ is sufficiently small since $\{\b_k\}_k$ is increasing and unbounded. 
It remains to control the first term in \eqref{eq:cvg metric}. To that end, after recalling Step 2 of Algorithm~\ref{Algo:2} and applying the triangle inequality, we can write that 
\begin{align}
& \dist( -\nabla_x \L_{\b_{k-1}} (x_k,y_{k}),    \partial g(x_{k}) )  \nonumber\\
& \le \dist( -\nabla_x \L_{\b_{k-1}} (x_k,y_{k-1}) , \partial g(x_{k}) ) \nonumber\\
&  + \| \nabla_x \L_{\b_{k-1}} (x_k,y_{k})-\nabla_x \L_{\b_{k-1}} (x_k,y_{k-1})   \|.
\label{eq:cvg metric part 1 brk down}
\end{align}
The first term on the right-hand side above is bounded by $\epsilon_k$, by Step 5 of  Algorithm~\ref{Algo:2}. 
For the second term on the right-hand side of \eqref{eq:cvg metric part 1 brk down}, we write that 
\begin{align}
&  \| \nabla_x \L_{\b_{k-1}} (x_k,y_{k})-\nabla_x \L_{\b_{k-1}} (x_k,y_{k-1})   \|  \nonumber\\
& = \| DA(x_k)^\top (y_k - y_{k-1}) \| 
\qquad \text{(see \eqref{eq:Lagrangian})}
\nonumber\\
& \le \lambda'_A \|y_k- y_{k-1}\| 
\qquad \text{(see \eqref{eq:defn_restricted_lipsichtz})} \nonumber\\
& = \lambda'_A \s_k \|A (x_k) \|
\qquad \text{(see Step 5 of Algorithm \ref{Algo:2})} \nonumber\\
& \le \frac{2\lambda'_A \s_k }{\nu \b_{k-1} }( \lambda'_f+ \lambda'_Ay_{\max})  .
\qquad \text{(see \eqref{eq:cvg metric part 2})}
\label{eq:part_1_2}
\end{align}
By combining (\ref{eq:cvg metric part 1 brk down},\ref{eq:part_1_2}), we find that 
\begin{align}
& \dist( \nabla_x \L_{\b_{k-1}} (x_k,y_{k}),    \partial g(x_{k}) )   \nonumber\\
& \le \frac{2\lambda'_A \s_k }{\nu \b_{k-1} }( \lambda'_f+ \lambda'_Ay_{\max})  +  \epsilon_k. 
\label{eq:cvg metric part 1}
\end{align}
By combining (\ref{eq:cvg metric part 2},\ref{eq:cvg metric part 1}), we find that 
\begin{align}
& \dist( -\nabla_x \L_{\b_{k-1}}(x_k,y_k),\partial g(x_k)) + \| A(x_k)\|  \nonumber\\
& \le  \left( \frac{2\lambda'_A \s_k }{\nu \b_{k-1} }( \lambda'_f+ \lambda'_Ay_{\max})  +  \epsilon_k \right) \nonumber\\
& \qquad  + 2\left( \frac{ \lambda'_f + \lambda'_A y_{\max}}{\nu \b_{k-1} }   \right).
\end{align}
Applying  $\s_k\le \s_1$, we find that 
\begin{align}
& \dist( -\nabla_x \L_{\b_{k-1}}(x_k,y_k),\partial g(x_k)) + \| A(x_k)\|  \nonumber\\
& \le  \frac{ 2\lambda'_A\s_1 + 2}{ \nu\b_{k-1}} ( \lambda'_f+\lambda'_A y_{\max}) + \epsilon_k.
\end{align}
For the second part of the theorem, we use the Weyl's inequality and Step 5 of Algorithm~\ref{Algo:2} to write
\begin{align}\label{eq:sec}
\lambda_{\text{min}} &(\nabla_{xx} \mathcal{L}_{\beta_{k-1}}(x_k, y_{k-1})) \geq \lambda_{\text{min}} (\nabla_{xx} \mathcal{L}_{\beta_{k-1}}(x_k, y_{k})) \notag \\&-  \sigma_k \|  \sum_{i=1}^m A_i(x_k)  \nabla^2 A_i(x_k) \|.
\end{align}
The first term on the right-hand side is lower bounded by $-\epsilon_{k-1}$ by Step 2 of Algorithm~\ref{Algo:2}. We next bound the second term on the right-hand side above as 
\begin{align*}
& \sigma_k \|  \sum_{i=1}^m A_i(x_k) \nabla^2 A_i(x_k)   \|  \\
&\le \sigma_k \sqrt{m} \max_{i} \| A_i(x_k)\| \| \nabla^2 A_i(x_k)\|   \\ 
&\le \sigma_k \sqrt{m} \lambda_A \frac{ 2\lambda'_f +2 \lambda'_A y_{\max} }{\nu \b_{k-1} },
\end{align*}
where the last inequality is due to~(\ref{eq:smoothness basic},\ref{eq:cvg metric part 2}). 
Plugging into~\eqref{eq:sec} gives
\begin{align*}
& \lambda_{\text{min}}(\nabla_{xx} \mathcal{L}_{\beta_{k-1}}(x_k, y_{k-1}))\nonumber\\
&  \geq -\epsilon_{k-1} - \sigma_k \sqrt{m} \lambda_A \frac{ 2\lambda'_f +2 \lambda'_A y_{\max} }{\nu \b_{k-1} },
\end{align*}
which completes the proof of Theorem \ref{thm:main}.

\section{Proof of Lemma \ref{lem:smoothness}\label{sec:proof of smoothness lemma}}
\begin{proof}
Note that
\begin{align}
\mathcal{L}_{\beta}(x,y) = f(x) + \sum_{i=1}^m y_i A_i (x)  + \frac{\b}{2} \sum_{i=1}^m (A_i(x))^2,
\end{align}
which implies that 
\begin{align}
& \nabla_x \mathcal{L}_\beta(x,y) \nonumber\\
& = \nabla f(x) + \sum_{i=1}^m y_i  \nabla A_i(x) + \frac{\b}{2} \sum_{i=1}^m A_i(x) \nabla A_i(x) \nonumber\\
& = \nabla f(x) + DA(x)^\top y  + \b DA(x)^\top A(x),
\end{align}
where $DA(x)$ is the Jacobian of $A$ at $x$. By taking another derivative with respect to $x$, we reach 
\begin{align}
\nabla^2_x \mathcal{L}_\beta(x,y) & = \nabla^2 f(x) + \sum_{i=1}^m \left( y_i + \b A_i(x) \right) \nabla^2 A_i(x) \nonumber\\ 
& \qquad +\b \sum_{i=1}^m \nabla A_i(x) \nabla A_i(x)^\top. 
\end{align}
It follows that 
\begin{align}
& \|\nabla_x^2 \mathcal{L}_\beta(x,y)\|\nonumber\\
 & \le \| \nabla^2 f(x) \| +  \max_i \| \nabla^2 A_i(x)\| \left (\|y\|_1+\b \|A(x)\|_1 \right) \nonumber\\
& \qquad +\beta\sum_{i=1}^m \|\nabla A_i(x)\|^2 \nonumber\\
& \le \lambda_h+  \sqrt{m} \lambda_A \left (\|y\|+\b \|A(x)\| \right) + \b \|DA(x)\|^2_F.
\end{align}
For every $x$ such that $\|x\|\le \rho$ and $\|A(x)\|\le \rho$, we conclude that 
\begin{align}
\|\nabla_x^2 \mathcal{L}_\beta(x,y)\|
& \le \lambda_f + \sqrt{m}\lambda_A \left(\|y\| + \b\rho' \right)+ \b \max_{\|x\|\le \rho}\|DA(x)\|_F^2,
\end{align}
which completes the proof of Lemma \ref{lem:smoothness}. 
\end{proof}

\section{Clustering \label{sec:verification2}}

We only verify the condition in~\eqref{eq:regularity} here. 
Note that
\begin{align}
A(x) = VV^\top \mathbf{1}-  \mathbf{1},
\end{align}
\begin{align}
DA(x) & = 
\left[
\begin{array}{cccc}
w_{1,1} x_1^\top & \cdots & w_{1,n} x_{1}^\top\\
\vdots\\
w_{n,1}x_{n}^\top &  \cdots & w_{n,n}1 x_{n}^\top
\end{array}
\right] \nonumber\\
& = \left[ 
\begin{array}{ccc}
V & \cdots & V
\end{array}
\right]
+ 
\left[
\begin{array}{ccc}
x_1^\top & \\
& \ddots & \\
& & x_n^\top
\end{array}
\right],
\label{eq:Jacobian clustering}
\end{align}
where $w_{i.i}=2$ and $w_{i,j}=1$ for $i\ne j$. In the last line above, $n$ copies of $V$ appear and the last matrix above is block-diagonal. For $x_k$, define $V_k$ accordingly and  let $x_{k,i}$ be the $i$th row of $V_k$. 
Consequently,
\begin{align}
DA(x_k)^\top A(x_k) & = 
\left[
\begin{array}{c}
(V_k^\top V_k - I_n) V_k^\top \mathbf{1}\\
\vdots\\
(V_k^\top V_k - I_n) V_k^\top \mathbf{1}
\end{array}
\right] \nonumber\\
& \qquad + 
\left[
\begin{array}{c}
x_{k,1} (V_k V_k^\top \mathbf{1}-   \mathbf{1})_1 \\  
\vdots \\
x_{k,n} (V_k V_k^\top \mathbf{1}-   \mathbf{1})_n 
\end{array}
\right],
\end{align}
where $I_n\in \RR^{n\times n}$ is the identity matrix. 
Let us make a number of simplifying assumptions. First, we assume that $\|x_k\|< \sqrt{s}$ (which can be  enforced in the iterates by replacing $C$ with $(1-\epsilon)C$ for a small positive $\epsilon$ in the subproblems).  Under this assumption, it follows that 
\begin{align}
(\partial g(x_k))_i =
\begin{cases}
0 & (x_k)_i > 0\\
\{a: a\le 0\} & (x_k)_i = 0,
\end{cases}
\qquad i\le d.
\label{eq:exp-subgrad-cluster}
\end{align}
Second, we assume that $V_k$ has nearly orthonormal columns, namely, $V_k^\top V_k \approx I_n$. This can also be  enforced in each iterate of Algorithm~\ref{Algo:2} and naturally corresponds to well-separated clusters. While a more fine-tuned argument can remove these assumptions, they will help us simplify the presentation here. Under these assumptions, the (squared) right-hand side of \eqref{eq:regularity} becomes
\begin{align}
& \dist\left( -DA(x_k)^\top A(x_k) , \frac{\partial g(x_k)}{ \b_{k-1}}  \right)^2 \nonumber\\
& = \left\| \left( -DA(x_k)^\top A(x_k) \right)_+\right\|^2
\qquad (a_+ = \max(a,0))
 \nonumber\\
& = 
\left\| 
\left[
\begin{array}{c}
x_{k,1}  (V_k V_k^\top \mathbf{1}-   \mathbf{1})_1 \\  
\vdots \\
x_{k,n}  (V_k V_k^\top \mathbf{1}-   \mathbf{1})_n 
\end{array}
\right]
\right\|^2 
\qquad (x_k\in C \Rightarrow x_k\ge 0)
\nonumber\\
& = \sum_{i=1}^n \| x_{k,i}\|^2 (V_kV_k^\top \mathbf{1}-\mathbf{1})_i^2 \nonumber\\
& \ge \min_i \| x_{k,i}\|^2
\cdot \sum_{i=1}^n  (V_kV_k^\top \mathbf{1}-\mathbf{1})_i^2 \nonumber\\
& = \min_i \| x_{k,i}\|^2
\cdot \| V_kV_k^\top \mathbf{1}-\mathbf{1} \|^2.
\label{eq:final-cnd-cluster}
\end{align}
Therefore, given a prescribed $\nu$, ensuring $\min_i \|x_{k,i}\| \ge \nu$ guarantees \eqref{eq:regularity}. When the algorithm is initialized close enough to the constraint set, there is indeed no need to separately enforce \eqref{eq:final-cnd-cluster}. In practice, often $n$ exceeds the number of true clusters and a more intricate analysis is required to establish \eqref{eq:regularity} by restricting the argument to a particular subspace of $\RR^n$.

\section{Additional Experiments}{\label{sec:adexp}}

\subsection{Basis Pursuit}{\label{sec:bp}}

Basis Pursuit (BP) finds sparsest solutions of an under-determined system of linear equations by solving
\begin{align}
\min_{z} \|z\|_1 \subjto
Bz = b,
\label{eq:bp_main}
\end{align}
where $B \in \RR^{n \times d}$ and $b \in \RR^{n}$. 
Various primal-dual convex optimization algorithms are available in the literature to solve BP, including~\cite{tran2018adaptive,chambolle2011first}.  
We compare our algorithm against state-of-the-art primal-dual convex methods for solving \eqref{eq:bp_main}, namely, Chambole-Pock~\cite{chambolle2011first}, ASGARD~\cite{tran2018smooth} and ASGARD-DL~\cite{tran2018adaptive}. 


Here, we take a different approach and cast~(\ref{eq:bp_main}) as an instance of~\eqref{prob:01}. Note that any $z \in \RR^d$ can be decomposed as $z = z^+ - z^-$, where $z^+,z^-\in \RR^d$ are the positive and negative parts of $z$, respectively. Then consider the change of variables $z^+ = u_1^{\circ 2}$ and  $z^-= u_2^{\circ 2} \in \RR^d$, where $\circ$ denotes element-wise power. Next, we concatenate $u_1$ and $u_2$ as $x := [ u_1^{\top}, u_2^{\top} ]^{\top} \in \RR^{2d}$ and define $\overline{B} := [B, -B] \in \RR^{n \times 2d}$. Then, \eqref{eq:bp_main} is equivalent to \eqref{prob:01}  with 
\begin{align}
f(x) =& \|x\|^2, \quad g(x) = 0,\subjto
A(x) = \overline{B}x^{\circ 2}- b.
\label{eq:bp-equiv}
\end{align}

We draw the entries of $B$ independently from a zero-mean and unit-variance Gaussian distribution.  For a fixed sparsity level $k$, the support of $z_*\in \RR^d$ and its nonzero amplitudes are also drawn from the standard Gaussian distribution. 
Then the measurement vector  is created as $b = Bz + \epsilon$, where $\epsilon$ is the noise vector with entries drawn independently from the zero-mean Gaussian distribution with variance $\sigma^2=10^{-6}$. 

The results are compiled in Figure~\ref{fig:bp1}. Clearly,  the performance of Algorithm~\ref{Algo:2} with a second-order solver for BP is comparable to the rest. 
It is, indeed, interesting to see that these type of nonconvex relaxations gives the solution of convex one and first order methods succeed.

\begin{figure}[]
\centering
{\includegraphics[width=.4\columnwidth]{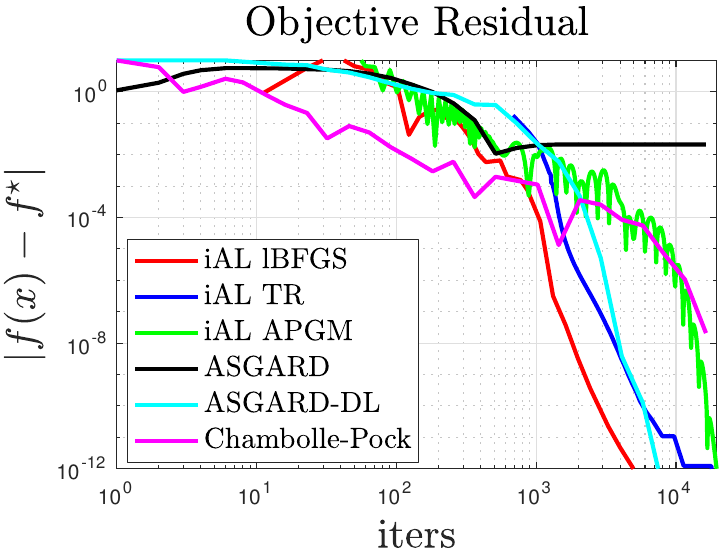}}
{\includegraphics[width=.4\columnwidth]{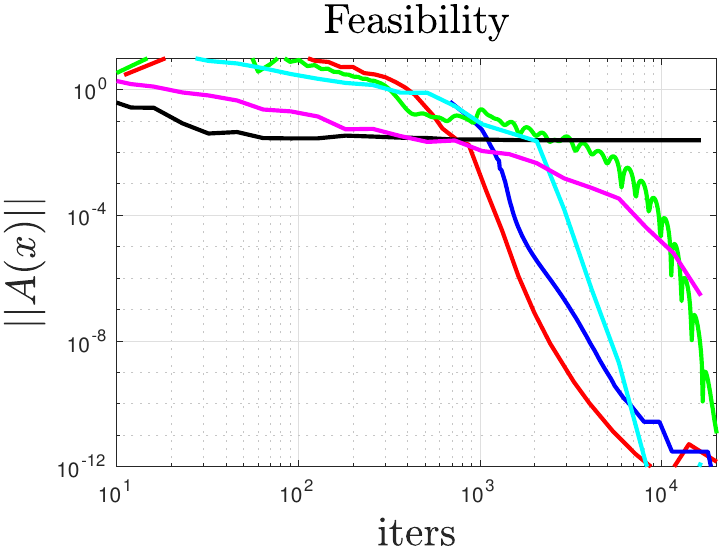}}
\caption{Basis Pursuit}
\label{fig:bp1}
\end{figure}

\paragraph{Discussion:} 
The true potential of our reformulation is in dealing with more structured norms rather than $\ell_1$, where computing the proximal operator is often intractable. One such case is the latent group lasso norm~\cite{obozinski2011group}, defined as 
\begin{align*}
\|z\|_{\Omega} = \sum_{i=1}^I \| z_{\Omega_i}  \|, 
\end{align*}
{where $\{\Omega_i\}_{i=1}^I$ are (not necessarily disjoint) index sets of $\{1,\cdots,d\}$. Although not studied here, we believe that the nonconvex framework presented in this paper can serve to solve more complicated problems, such as the latent group lasso. We leave this research direction for future work.

\paragraph{Condition verification:}
In the sequel, we verify that Theorem~\ref{thm:main} indeed applies to~\eqref{prob:01} with the above $f,A,g$. 
Note that 
\begin{align}
DA(x) = 2 \overline{B} \text{diag}(x), 
\label{eq:jacob-bp}
\end{align}
where $\text{diag}(x)\in\RR^{2d\times 2d}$ is the diagonal matrix formed by $x$. The left-hand side of \eqref{eq:regularity} then reads as 
\begin{align}
& \text{dist} \left(  -DA(x_k)^\top A(x_k) , \frac{\partial g(x_k)}{\b_{k-1}}  \right) \nonumber\\
& = \text{dist} \left(  -DA(x_k)^\top A(x_k) , \{0\}  \right) \qquad (g\equiv 0)\nonumber\\
& = \|DA(x_k)^\top A(x_k)   \| \nonumber\\
& =2  \|  \text{diag}(x_k) \overline{B}^\top ( \overline{B}x_k^{\circ 2} -b) \|.
\qquad \text{(see \eqref{eq:jacob-bp})}
\label{eq:cnd-bp-pre}
\end{align}
To bound the last line above, let $x_*$ be a solution of~\eqref{prob:01} and note that $\overline{B} x_*^{\circ 2} = b $ by definition.  Let also $z_k,z_*\in \RR^d$ denote the vectors corresponding to $x_k,x_*$. Corresponding to $x_k$, also define $u_{k,1},u_{k,2}$ naturally and let $|z_k| = u_{k,1}^{\circ 2} + u_{k,2}^{\circ 2} \in \RR^d$ be the vector of amplitudes of $z_k$. To simplify matters, let us assume also that $B$ is full-rank. 
We then rewrite the norm in the last line of \eqref{eq:cnd-bp-pre} as 
\begin{align}
&   \|  \text{diag}(x_k) \overline{B}^\top ( \overline{B}x_k^{\circ 2} -b) \|^2 \nonumber\\
& =   \|  \text{diag}(x_k) \overline{B}^\top  \overline{B} (x_k^{\circ 2} -x_*^{\circ 2}) \|^2 
\qquad (\overline{B} x_*^{\circ 2} = b) 
\nonumber\\
& =  \|  \text{diag}(x_k)\overline{B}^\top B (x_k - x_*) \|^2 \nonumber\\
& =  \|  \text{diag}(u_{k,1})B^\top B (z_k - z_*) \|^2 \nonumber\\
& \qquad + \|  \text{diag}(u_{k,2})B^\top B (z_k - z_*) \|^2 \nonumber\\
& =  \|  \text{diag}(u_{k,1}^{\circ 2}+ u_{k,2}^{\circ 2}) B^\top B (z_k - z_*) \|^2 \nonumber\\
& =  \|  \text{diag}(|z_k|) B^\top B (z_k - z_*) \|^2 \nonumber\\
& \ge 
\eta_n (   B \text{diag}(|z_k|) )^2
\| B(z_k - z_*) \|^2 \nonumber\\
& =
\eta_n (   B \text{diag}(|z_k|) )^2
\| B z_k -b \|^2
\qquad ( Bz_* = \ol{B} x^{\circ2}_* = b) \nonumber\\
& \ge \min_{|T|=n}\eta_n(B_T)\cdot  |z_{k,(n)}|^2 \|Bz_k - b\|^2,
\end{align}
where $\eta_n(\cdot)$ returns the $n$th largest singular value of its argument. In the last line above, $B_T$ is the restriction of $B$ to the columns indexed by $T$ of size $n$. Moreover, $z_{k,(n)}$ is the $n$th largest entry of $z$ in magnitude. 
Given  a prescribed $\nu$,  \eqref{eq:regularity} therefore holds if 
\begin{align}
|z_{k,(n)} | \ge \frac{\nu}{2 \sqrt{\min_{|T|=n} \eta_n(B_T)}} ,
\label{eq:final-bp-cnd}
\end{align}
for every iteration $k$. If Algorithm \ref{Algo:2} is initialized close enough to the solution $z^*$ and the entries of $z^*$ are sufficiently large in magnitude, there will be no need to directly enforce \eqref{eq:final-bp-cnd}.

\subsection{Generalized Eigenvalue Problem}{\label{sec:geig}}

\begin{figure}[!h]
\begin{tabular}{l|l|l}
~~~~~~~~~(i) $C:$ Gaussian iid & ~~~~~(ii) $C:$ Polynomial decay & ~~~~~(iii) $C:$ Exponential decay  \\

\includegraphics[width=.3\columnwidth]{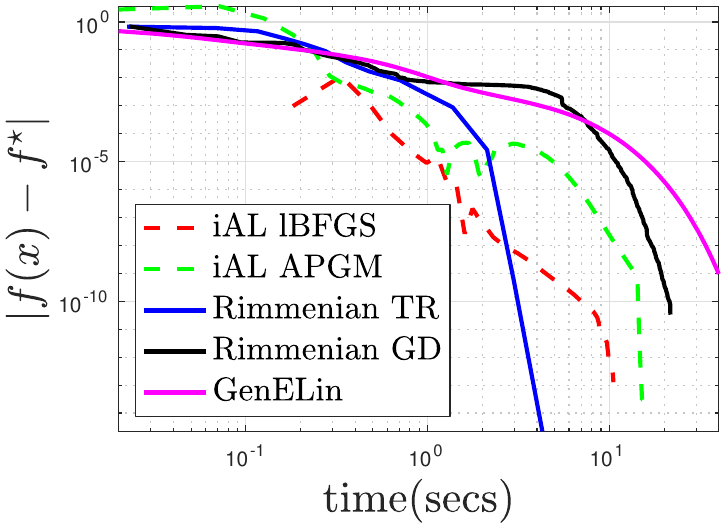}&
\includegraphics[width=.3\columnwidth]{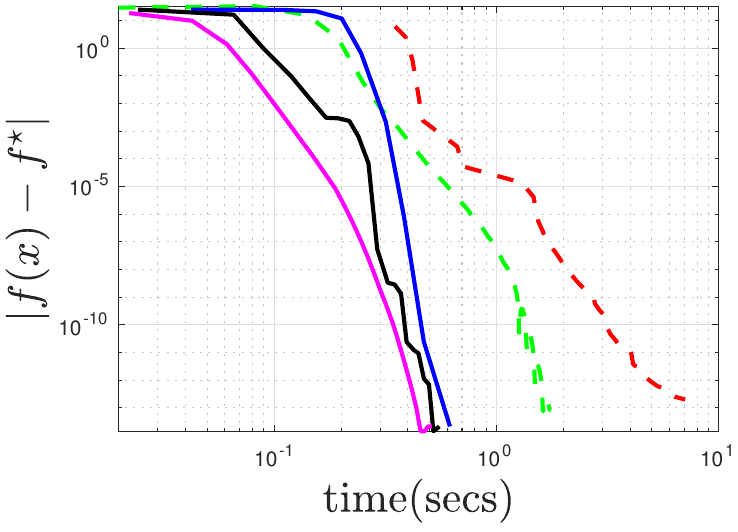}& 
\includegraphics[width=.3\columnwidth]{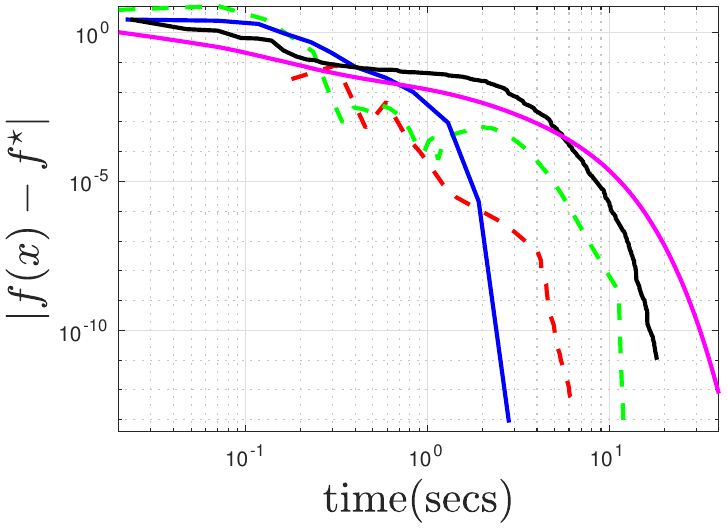}\\

\includegraphics[width=.31\columnwidth]{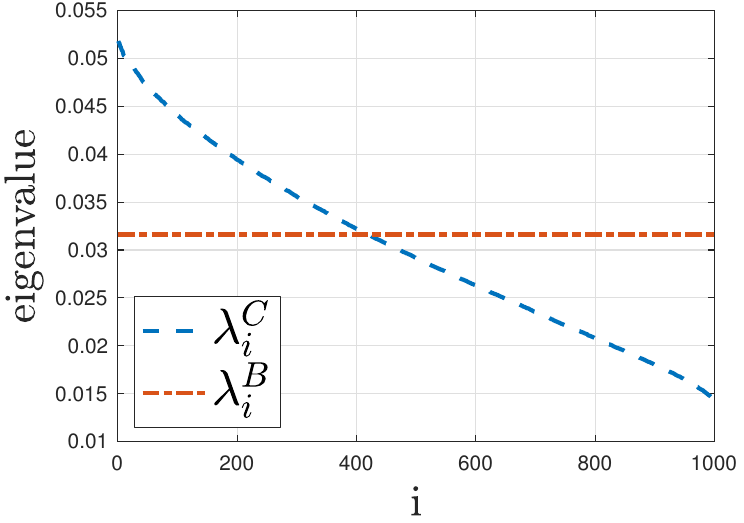} &
\includegraphics[width=.31\columnwidth]{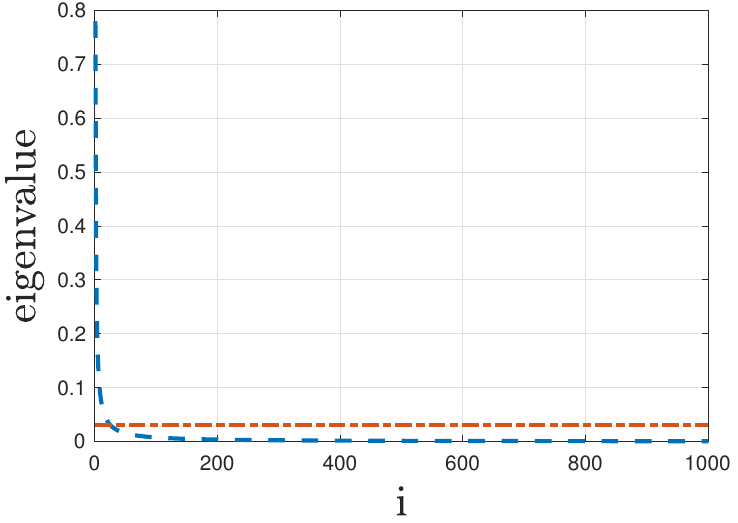}  & 
\includegraphics[width=.31\columnwidth]{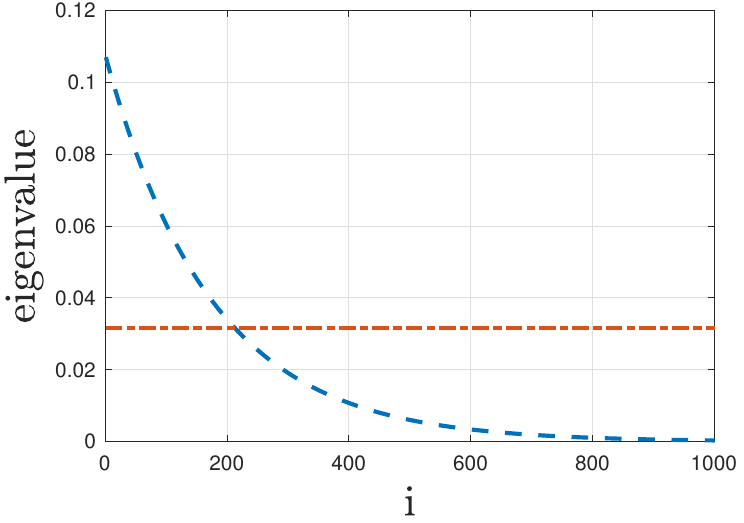}  \\ \hline
~~~~~~~~~~~~~~~~~~~~(iv) & ~~~~~~~~~~~~~~~~~~~~(v) & ~~~~~~~~~~~~~~~~~~~(vi) \\

\includegraphics[width=.3\columnwidth]{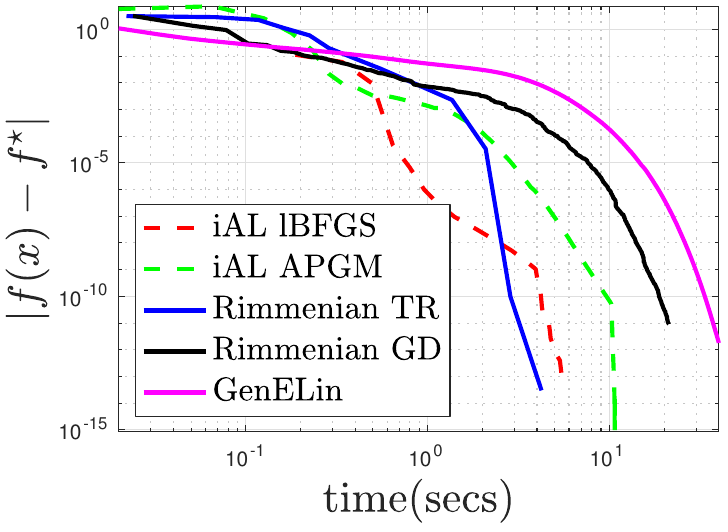}&
\includegraphics[width=.3\columnwidth]{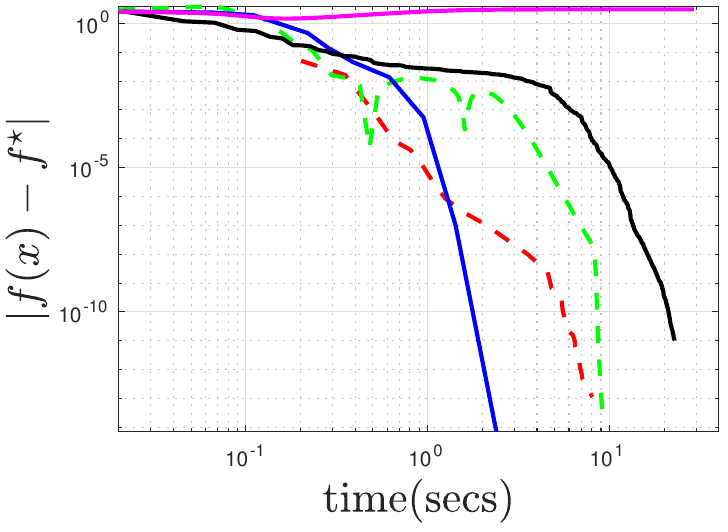}& 
\includegraphics[width=.3\columnwidth]{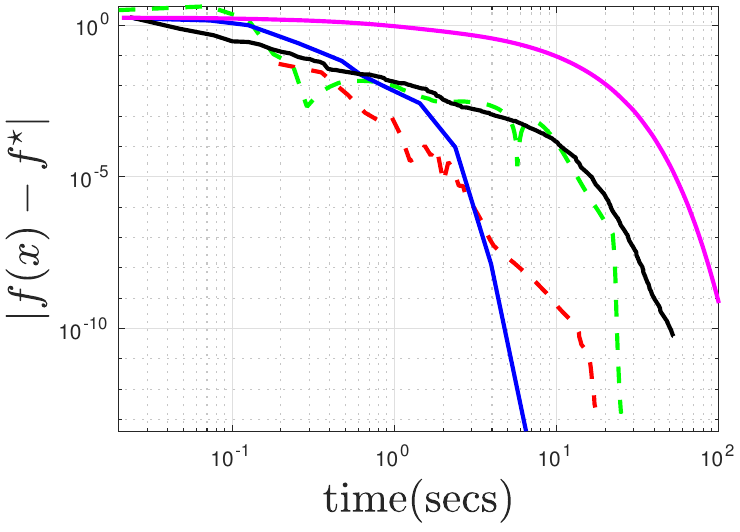}\\

\includegraphics[width=.31\columnwidth]{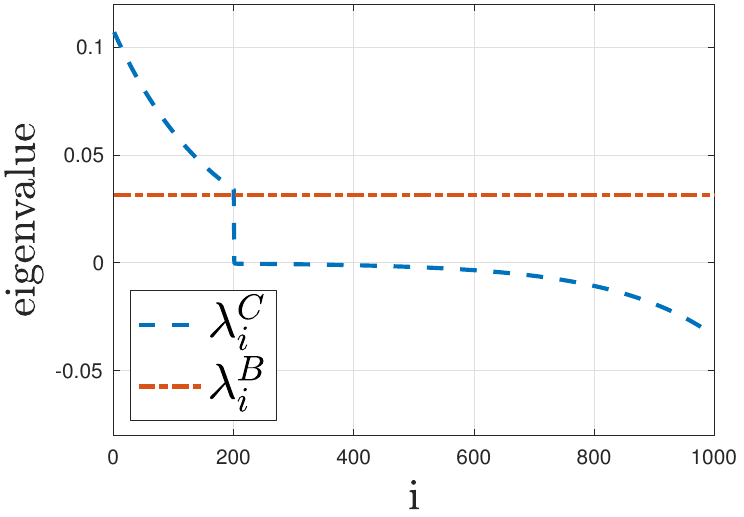} &
\includegraphics[width=.31\columnwidth]{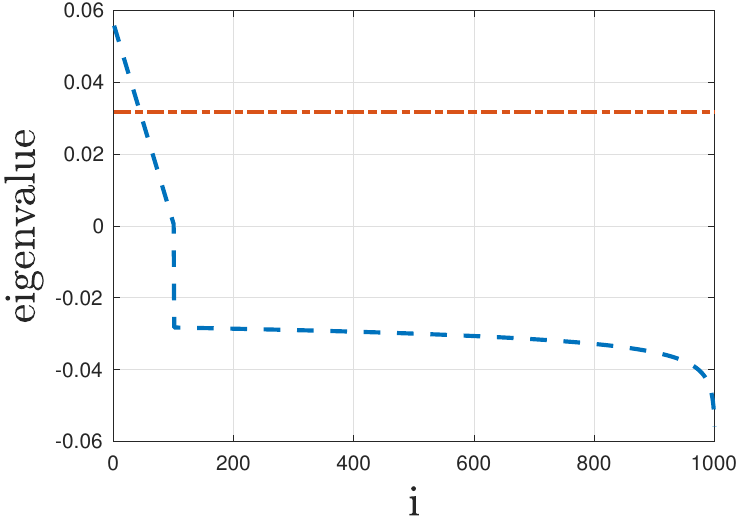}  & 
\includegraphics[width=.31\columnwidth]{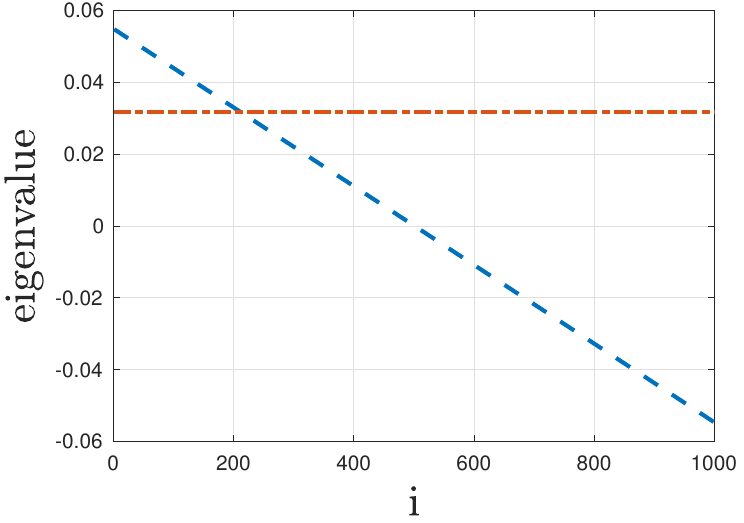} 
 
\end{tabular}
\caption{{\it{(Top)}} Objective convergence for calculating top generalized eigenvalue and eigenvector of $B$ and $C$. {\it{(Bottom)}} Eigenvalue structure of the matrices. For (i),(ii) and (iii), $C$ is positive semidefinite; for (iv), (v) and (vi), $C$ contains negative eigenvalues. {[(i): Generated by taking symmetric part of iid Gaussian matrix. (ii): Generated by randomly rotating diag($1^{-p}, 2^{-p}, \cdots, 1000^{-p}$)($p=1$). (iii): Generated by randomly rotating diag($10^{-p}, 10^{-2p}, \cdots, 10^{-1000p}$)($p=0.0025$).]} 
}
\label{fig:geig1}
\end{figure}

Generalized eigenvalue problem has extensive applications in machine learning, statistics and data analysis~\cite{ge2016efficient}.
The well-known nonconvex formulation of the problem is~\cite{boumal2016non} given by
\begin{align}
\begin{cases}
\underset{x\in\mathbb{R}^n}{\min} x^\top C x \\ 
x^\top B x = 1,
\end{cases}
\label{eq:eig}
\end{align}
where $B, C \in \RR^{n \times n}$ are symmetric matrices and $B$ is positive definite, namely, $B \succ 0$.
The generalized eigenvector computation is equivalent to performing principal component analysis (PCA) of $C$ in the norm $B$. It is also equivalent to computing the top eigenvector of symmetric matrix $S = B^{-1/2}CB^{1/2}$ and multiplying the resulting vector by $B^{-1/2}$. However, for  large values of $n$, computing $B^{-1/2}$ is extremely expensive. 
The natural convex SDP relaxation for~\eqref{eq:eig} involves lifting $Y = xx^\top$ and removing the nonconvex rank$(Y) = 1$ constraint, namely, 
\begin{align}
\begin{cases}
\underset{Y \in \RR^{n \times n}}{\min} \text{tr}(CY)\\
\text{tr}(BY) = 1, \quad X \succeq 0.
\end{cases}
\label{eq:eig-sdp}
\end{align}
Here, however, we opt to directly solve~\eqref{eq:eig} because it  fits into our template with
\begin{align}
f(x) =& x^\top C x, \quad g(x) = 0,\nonumber\\
A(x) =& x^\top B x - 1.
\label{eq:eig-equiv}
\end{align}
We compare our approach against three different methods: manifold based Riemannian gradient descent and Riemannian trust region methods in \cite{boumal2016global} and  the linear system solver in~\cite{ge2016efficient}, abbrevated as GenELin.  We have used Manopt software package in \cite{boumal2014manopt} for the manifold based methods. For GenELin, we have utilized Matlab's backslash operator as the linear solver. The results are compiled in Figure \ref{fig:geig1}.

\paragraph{Condition verification:}
Here, we verify the regularity condition in \eqref{eq:regularity} for problem \eqref{eq:eig}. Note that

\begin{align}
DA(x) = (2Bx)^\top.
\label{eq:jacobian-gen-eval}
\end{align}
Therefore,
\begin{align}
 \dist\left( -DA(x_k)^\top A(x_k) , \frac{\partial g(x_k)}{ \b_{k-1}}  \right)^2 & = \dist\left( -DA(x_k)^\top A(x_k) , \{0\}  \right)^2 
\qquad (g\equiv 0) 
 \nonumber\\
& = \| DA(x_k)^\top A(x_k) \|^2 
\nonumber\\
& = \|2Bx_k (x_k^\top Bx_k - 1)\|^2 
\qquad \text{(see \eqref{eq:jacobian-gen-eval})}
\nonumber\\
& = 4 (x_k^\top Bx_k - 1)^2\|Bx_k\|^2 
\nonumber\\
& = 4\|Bx_k\|^2 \|A(x_k)\|^2 
\qquad \text{(see \eqref{eq:eig-equiv})}
\nonumber \\
& \ge \eta_{\min}(B)^2\|x_k\|^2 \|A(x_k)\|^2,
\end{align}
where $\eta_{\min}(B)$ is the smallest eigenvalue of the positive definite matrix $B$. 
Therefore, for a prescribed $\nu$, the regularity condition in \eqref{eq:regularity} holds with $\|x_k\| \ge \nu /\eta_{min}$ for every $k$. If the algorithm is initialized close enough to the constraint set, there will be again no need to directly enforce this latter condition.

\subsection{$\ell_\infty$ Denoising with a Generative Prior}{\label{sec:gan}}
The authors of \cite{Samangouei2018,Ilyas2017} have proposed to
project onto the range of a Generative Adversarial network (GAN)
\cite{Goodfellow2014}, as a way to defend against adversarial examples. For a
given noisy observation $x^* + \eta$, they consider a projection in the
$\ell_2$ norm. We instead propose to use our augmented Lagrangian method to
denoise in the $\ell_\infty$ norm, a much harder task:
\begin{align}
\begin{array}{lll}
\underset{x, z}{\text{min}} & & \|x^* + \eta - x\|_\infty \\
\text{s.t. } && x=G(z).
\end{array}
\end{align}

\begin{figure}[!h]
\begin{center}
{\includegraphics[scale=0.9]{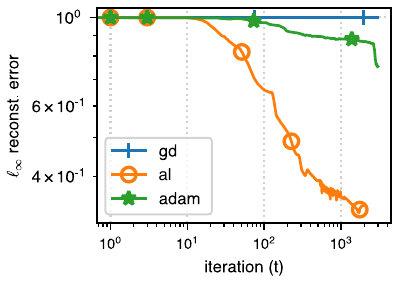}}
\caption{Augmented Lagrangian vs Adam and Gradient descent for $\ell_\infty$ denoising}
\label{fig:comparison_fab}
\end{center}
\end{figure}

We use a pretrained generator for the MNIST dataset, given by a standard
deconvolutional neural network architecture \cite{Radford2015}. We compare the
succesful optimizer Adam \cite{Kingma2014} and gradient Descent against our
method. Our algorithm involves two forward and one backward pass through the
network, as oposed to Adam that requires only one forward/backward pass. For
this reason we let our algorithm run for 2000 iterations, and Adam and GD for
3000 iterations. Both Adam and gradient descent generate a sequence of feasible
iterates $x_t=G(z_t)$. For this reason we plot the objective evaluated at the
point $G(z_t)$ vs iteration count in figure \ref{fig:comparison_fab}. Our
method successfully minimizes the objective value, while Adam and GD do not.

\subsection{Quadratic assginment problem}{\label{sec:qap}}

Let $K$, $L$ be $n \times n$ symmetric metrices. QAP in its simplest form can be written as

\begin{align}
\max \text{tr}(KPLP), \,\,\,\,\,\,\, \text{subject to}\,\, P \,\,\text{be a permutation matrix}
\label{eq:qap1}
\end{align}

A direct approach for solving \eqref{eq:qap1} involves a combinatorial search. 
To get the SDP relaxation of \eqref{eq:qap1}, we will first lift the QAP to a problem involving a larger
matrix. Observe that the objective function takes the form

\begin{align*}
\text{tr}((K\otimes L) (\text{vec}(P)\text{vec}(P^\top))),
\end{align*}
 where $\otimes$ denotes the Kronecker product. Therefore, we can recast \eqref{eq:qap1} as 
 
 \begin{align}
\text{tr}((K\otimes L) Y) \,\,\,\,\text{subject to} \,\,\, Y = \text{vec}(P)\text{vec}(P^\top),
\label{eq:qapkron}
\end{align}
where $P$ is a permutation matrix. We can relax the equality constraint in \eqref{eq:qapkron} to a semidefinite constraint and write it in an equivalent form as

 \begin{align*}
X = 
\begin{bmatrix} 
1 & \text{vec}(P)^\top\\
 \text{vec}(P) & Y 
\end{bmatrix}
\succeq 0 \,\,\, \text{for a symmetric} X \in \mathbb{S}^{(n^2+1) \times (n^2+1)} 
\end{align*}

We now introduce the following constraints such that 
\begin{equation}
B_k(X) = {\bf{b_k}},  \,\,\,\, {\bf b_k} \in \RR^{m_k}\,\, 
\label{eq:qapcons}
\end{equation}
to make sure X has a proper structure. Here, $B_k$ is a linear operator on $X$ and the total number of constraints is $m = \sum_{k} m_k.$ Hence, SDP relaxation of the quadratic assignment problem takes the form,  


\begin{align}
\max\,\,\, & \langle C, X \rangle \nonumber \\
 \text{subject to} \,\,\,& P1 = 1, \,\, 1^\top P = 1,\,\, P\geq 0 \nonumber \\
 & \text{trace}_1(Y) = I \,\,\, \text{trace}_2(Y) = I \nonumber \\
 & \text{vec}(P) = \text{diag}(Y) \nonumber \\
& \text{trace}(Y) = n \,\, \begin{bmatrix} 
1 & \text{vec}(P)^\top\\
 \text{vec}(P) & Y 
\end{bmatrix}
\succeq 0,  
\label{eq:qap_sdp}
\end{align}
where $\text{trace}_1(.)$ and $\text{trace}_2(.)$ are partial traces satisfying,
$$
\text{trace}_1(K \otimes L) = \text{trace}(K)L \,\,\,\,\,\, \text{and} \,\,\,\,\, \text{trace}_2(K\otimes L)=  K \text{trace}(L)
$$
$$
\text{trace}_1^*(T) = I \otimes T  \,\,\,\,\,\, \text{and} \,\,\,\,\, \text{trace}_2^*(T)=  T \otimes I
$$
$1st$ set of equalities are due to the fact that permutation matrices are doubly stochastic. $2nd$ set of equalities are to ensure permutation matrices are orthogonal, i.e., $PP^\top = P^\top P=I$. $3rd$ set of equalities are to enforce every individual entry of the permutation matrix takes either $0$ or $1$,  i.e., $X_{1, i} = X_{i,i} \,\,\, \forall i \in [1, n^2+1]$. . Trace constraint in the last line is to bound the problem domain. 
By concatenating the $B_k$'s in \eqref{eq:qapcons}, we can rewrite \eqref{eq:qap_sdp} in standard SDP form as 

\begin{align}
\max\,\,\, & \langle C, X \rangle \nonumber \\
 \text{subject to} \,\,\,& B(X) = {\bf{b},\,\,\, b} \in \RR^m  \nonumber \\
 & \text{trace}(X) = n+1 \nonumber \\
& X_{ij} \geq 0, \,\,\,\, i,j\,\,\, \mathcal{G}\nonumber  \\
 & X\succeq0,
\end{align}
where $\mathcal{G}$ represents the index set for which we introduce the nonnegativities. When $\mathcal{G}$ covers the wholes set of indices, we get the best approximation to the original problem. However, it becomes computationally undesirable as the problem dimension increases. Hence, we remove the redundant nonnegativity constraints and enforce it for the indices where Kronecker product between $K$ and $L$ is nonzero. 

We penalize the non-negativity constraints and add it to the  augmented Lagrangian objective since a projection to the positive orthant approach in the low rank space as we did for the clustering does not work here. 

We take \cite{ferreira2018semidefinite} as the baseline. This is an SDP based approach for solving QAP problems containing a sparse graph. 
We compare against the best feasible upper bounds reported in \cite{ferreira2018semidefinite} for the given instances. Here, optimality gap is defined as 
$$
\% \text{Gap} = \frac{|\text{bound} - \text{optimal}|}{\text{optimal}} \times 100
$$
We used a (relatively) sparse graph data set from the QAP library.
We run our low rank algorithm for different rank values. $r_m$ in each instance corresponds to the smallest integer satisfying the Pataki bound~\cite{pataki1998rank, barvinok1995problems}. 
Results are shown in Table \ref{tb:qap}. 
Primal feasibility values except for the last instance $esc128$ is less than $10^{-5}$ and we obtained bounds at least as good as the ones reported in \cite{ferreira2018semidefinite} for these problems.

For $esc128$, the primal feasibility is $\approx 10^{-1}$, hence, we could not manage to obtain a good optimality gap.

\begin{table}[]
\begin{tabular}{|l|l|l|l|l|l|l|l|}
\hline
& \multicolumn{1}{l|}{}              & \multicolumn{6}{c|}{Optimality Gap ($\%$)}                                                                                                                                                   \\ \hline
\multicolumn{1}{|c|}{\multirow{2}{*}{Data}} & \multicolumn{1}{c|}{\multirow{2}{*}{Optimal Value}} & \multicolumn{1}{c|}{\multirow{2}{*}{Sparse QAP~\cite{ferreira2018semidefinite}}} & \multicolumn{5}{c|}{iAL}    \\ \cline{4-8} 
\multicolumn{1}{|c|}{}                      & \multicolumn{1}{c|}{}                               & \multicolumn{1}{c|}{}                            & $r=10$ & $r=25$ & $r=50$ & $r=r_m$ & $r_m$ \\ \hline
esc16a                                      & 68                                                  & 8.8                                              & 11.8   & $\mathbf{0}$    & $\mathbf{0}$    & 5.9     & 157   \\ \hline
esc16b                                      & 292                                                 & $\mathbf{0}$                              & $\mathbf{0}$    & $\mathbf{0}$   & $\mathbf{0}$    & $\mathbf{0}$       & 224   \\ \hline
esc16c                                      & 160                                                 & 5                                                & 5.0    & 5.0    & $\mathbf{2.5}$    & 3.8     & 177   \\ \hline
esc16d                                      & 16                                                  & 12.5                                             & 37.5   & $\mathbf{0}$   & $\mathbf{0}$    & 25.0    & 126   \\ \hline
esc16e                                      & 28                                                  & 7.1                                              & 7.1    &  $\mathbf{0}$    & 14.3   & 7.1     & 126   \\ \hline
esc16g                                      & 26                                                  & $\mathbf{0}$                         & 23.1   & 7.7    &  $\mathbf{0}$    &  $\mathbf{0}$     & 126   \\ \hline
esc16h                                      & 996                                                 &  $\mathbf{0}$                          &  $\mathbf{0}$   &  $\mathbf{0}$   &  $\mathbf{0}$    &  $\mathbf{0}$     & 224   \\ \hline
esc16i                                      & 14                                                  &  $\mathbf{0}$                 &  $\mathbf{0}$    &  $\mathbf{0}$    & 14.3   &  $\mathbf{0}$    & 113   \\ \hline
esc16j                                      & 8                                                   & $\mathbf{0}$                    &  $\mathbf{0}$    & $\mathbf{0}$    &  $\mathbf{0}$   &  $\mathbf{0}$   & 106   \\ \hline
esc32a                                      & 130                                                 & 93.8                                             & 129.2  & 109.2  & 104.6  &$\mathbf{83.1}$     & 433   \\ \hline
esc32b                                      & 168                                                 & 88.1                                             & 111.9  & 92.9   & 97.6   & $\mathbf{69.0}$    & 508   \\ \hline
esc32c                                      & 642                                                 & 7.8                                              & 15.6   & 14.0   & 15.0   & $\mathbf{4.0}$     & 552   \\ \hline
esc32d                                      & 200                                                 & 21                                               & 28.0   & 28.0   & 29.0   &$\mathbf{17.0}$    & 470   \\ \hline
esc32e                                      & 2                                                   & $\mathbf{0}$                             & $\mathbf{0}$    & $\mathbf{0}$    & $\mathbf{0}$    & $\mathbf{0}$     & 220   \\ \hline
esc32g                                      & 6                                                   & $\mathbf{0}$                             & 33.3   &$\mathbf{0}$    & $\mathbf{0}$    & $\mathbf{0}$      & 234   \\ \hline
esc32h                                      & 438                                                 & 18.3                                             & 25.1   & 19.6   & 25.1   & $\mathbf{13.2}$     & 570   \\ \hline
esc64a                                      & 116                                                 & 53.4                                             & 62.1   & 51.7   & 58.6   & $\mathbf{34.5}$    & 899   \\ \hline
esc128                                      & 64                                                  & $\mathbf{175}$                                               & 256.3  & 193.8  & 243.8  & 215.6   & 2045  \\ \hline
\end{tabular}
\caption{Comparison between upper bounds on the problems from the QAP library with (relatively) sparse $L$.}
\label{tb:qap}
\end{table}

\end{document}